\newcommand{\R}{\mathbb{R}}
\newcommand{\I}{\mathcal{I}}
\newcommand{\B}{\mathcal{B}}
\newcommand{\M}{\mathcal{M}}
\newcommand{\Lambd}{\Gamma}
\newcommand{\lambd}{\gamma}
\theoremstyle{plain}
\newtheorem{defi}{Definition}[section]
\newtheorem{prop}[defi]{Proposition}
\newtheorem{teo}[defi]{Theorem}
\newtheorem{cor}[defi]{Corollary}
\newtheorem{lema}[defi]{Lemma}
\newtheorem{remark}[defi]{Remark}
\theoremstyle{definition}
\theoremstyle{remark}
\numberwithin{equation}{section}
\begin{document}

\title[]{Large harmonic functions for fully nonlinear fractional operators}

\author[]{Gonzalo D\'avila}
\address{
Gonzalo D\'avila: Departamento de Matem\'atica, Universidad T\'ecnica Federico Santa Mar\'ia \\
Casilla: v-110, Avda. Espa\~na 1680, Valpara\'iso, Chile
}
\email{gonzalo.davila@usm.cl}

\author[]{Alexander Quaas}
\address{
Alexander Quaas: Departamento de Matem\'atica, Universidad T\'ecnica Federico Santa Mar\'ia \\
Casilla: v-110, Avda. Espa\~na 1680, Valpara\'iso, Chile}
\email{alexander.quaas@usm.cl}

\author[]{Erwin Topp$^1$}
\address{
Erwin Topp: Departamento de Matem\'atica y C.C., Universidad de Santiago de Chile,
Casilla 307, Santiago, Chile.
}
\email{erwin.topp@usach.cl}

\keywords{Nonlocal operator, Harmonic functions, Dirichlet Problem, Large Solutions, Viscosity Solutions}

\subjclass[2020]{35F21, 35R11, 35B44, 35B40, 35D40}

\date{\today}
\footnote{ \ Corresponding author}

\begin{abstract}
We study existence, uniqueness and boundary blow-up profile for fractional harmonic functions on a bounded smooth domain $\Omega \subset \R^N$. We deal with harmonic functions associated to uniformly elliptic, fully nonlinear nonlocal operators, including the linear case
$$
(-\Delta)^s u = 0 \quad \mbox{in} \ \Omega,
$$ 
where $(-\Delta)^s$ denotes the fractional Laplacian of order $2s \in (0,2)$. We use the viscosity solution's theory and Perron's method to construct harmonic functions with zero exterior condition in $\bar \Omega^c$, and boundary blow-up profile 
$$
\lim_{x\to x_0, x \in \Omega}\mathrm{dist}(x, \partial \Omega)^{1-s}u(x)=h(x_0), \quad \mbox{for all} \quad x_0\in \partial \Omega,
$$
for any given boundary data $h \in C(\partial \Omega)$. Our method allows us to provide blow-up rate for the solution and its gradient estimates. Results are new even in the linear case.
\end{abstract}

\maketitle

\section{Introduction.}

In this paper, we consider $\Omega \subset \R^N$ a bounded domain with $C^2$ boundary, and deal with the existence, uniqueness and qualitative properties of fractional harmonic  functions on $\Omega$ that blow-up near the boundary $\partial \Omega$. 

For $s \in (0,1)$ fixed, we denote $\Delta^s$ the fractional Laplacian of order $2s$, explicitly given by
\begin{equation}\label{fracLapl}
\Delta^s u(x) = -(-\Delta)^su(x) = {C_{N, s} \mathrm{P.V.} \int_{\R^N} \frac{u(x) - u(z)}{|x - z|^{N + 2s}}dz},
\end{equation}
for measurable $u: \R^N \to \R$ and $x \in \R^N$ for which the integral makes sense. Here $P.V.$ stands for the Cauchy Principal Value and $C_{N,s} > 0$ is a well known normalizing constant, see~\cite{Hitch}. In particular, for sufficiently regular and summable function $u: \R^N \to \R$, we have $(-\Delta)^s u \to -\Delta u$ locally uniform in $\R^N$ as $s \to 1$.

We say that a function $u$ is $s$-harmonic in $\Omega$ if $\Delta^s u(x) = 0$ for all $x \in \Omega$. The notion of $s$-harmonicity is a subject of study in various mathematical settings, such  probability \cite{bass2}, harmonic and potential analysis~\cite{Bbook, land}, and they naturally arise in a broad range of applications, see for example 
for a review of some applications in \cite{Hitch}. 

In the context of the analysis of PDE, there are different notions of solution for the equation defining $s$-harmonicity, namely classical solutions, weak solutions, viscosity solutions, and others. Here we follow the viscosity solution's approach of Barles, Chasseigne and Imbert~\cite{BChI}, and Caffarelli and Silvestre~\cite{CS}.

In parallel to the classical second-order setting, $s$-harmonic functions play a central role in the study of more general elliptic equations due to its rich set of properties, such as mean value and representation formulas, positivity, regularity, (strong) maximum principles, Harnack inequality, and many others analytic and geometric tools. Nevertheless, there are some evident differences with the second-order case. A structural one as to do with the definition of the fractional Laplacian, from which $s$-harmonic functions need to be specified in $\Omega^c$. This feature implies some unexpected phenomena: for instance, it is proven in~\cite{DSV} the density of $s$-harmonic functions into $C_{loc}^k$, a phenomenon that does not appear in the second-order setting.

Another intriguing aspect of $s$-harmonicity has to do with the existence of large $s$-harmonic functions. This fact is studied in Abatangelo~\cite{aba}, where the author proves the existence and uniqueness of weak solutions to the Dirichlet problem
\begin{align}\label{eqzero}
\left\{\begin{array}{rll}
\Delta^s u & = f & \mbox{in} \  \Omega,\\
u & = g &  \mbox{in} \ \bar \Omega^c, \\
\lim \limits_{z \to x, \ z \in \Omega} \mathbb M(z) u(z) & = h(x) & \mbox{for} \ x \in \partial \Omega,
\end{array}\right.
\end{align}
for some appropriate data $f,g,h$. In fact, this data can be of measure type in~\cite{aba}, but for simplicity we may think on continuous and bounded functions, that is $f \in C_b(\Omega), g \in C_b(\bar \Omega^c)$ and $h \in C(\partial \Omega)$.

Here we concentrate in the case $f \equiv 0$. Notice that we have split the usual exterior condition on $\Omega^c$ in two parts: the \textsl{(actual) exterior} data $g$ in $\bar \Omega^c$, and the \textsl{(limit) boundary condition} $h$ in $\partial \Omega$. The function $\mathbb M$ depends on $\Omega$ and $s$, and it is referred in~\cite{aba} (up to an explicit modification) as \textsl{the Martin kernel} of the fractional Laplacian in $\Omega$. For example, when $\Omega$ is the unit ball $B_1$, this function has the explicit structure 
$$
\mathbb M(x) = \tilde C_{N,s} (1 - |x|)^{1 - s}, \quad x \in B_1,
$$
for some suitable constant $\tilde C_{N,s} > 0$, see Lemma 3.1.5 in~\cite{aba}. Notice that for $x \in B_1$ we have $\mathrm{dist}(x, \partial B_1) = 1 - |x|$, from which the solution $u$ to~\eqref{eqzero} may blow-up near the boundary. Moreover, it tends to $-\infty$ near boundary points where $h$ is negative. 
The results in~\cite{aba} are based on a representation formula for the solutions through Green's function associated to $\Delta^s$ and integration by parts techniques. As we mentioned above, this formulation has the advantage to deal with very irregular datum $f,g,h$ in~\eqref{eqzero}, but has a restricted applicability to more general operators.

\medskip

It is our aim in this paper to deal with fully nonlinear extensions of problem~\eqref{eqzero}, and for this we introduce our main assumptions.
We consider ellipticity constants $0 < \lambd \leq \Lambd < +\infty$ and the class $\mathcal L_0$ of measurable even kernels $K : \R^N \setminus \{ 0\} \to \R$ such that $K(y) = K(-y)$ for all $y$ and that satisfy the ellipticity condition
\begin{equation}\label{ellipticity}
\gamma |y|^{-(n + 2s)} \leq K(y) \leq \Gamma |y|^{-(N + 2s)}, \quad y \neq 0,
\end{equation}

For each $K \in \mathcal L_0$, we consider the linear operator
\begin{equation}\label{lineal}
L_K u(x) := \mathrm{P.V.} \int_{\R^n} [u(x + y)-u(x)] K(y) dy, 	
\end{equation}
which is well defined for measurable $u: \R^N \to \R$ satisfying adequate regularity assumptions on $x$ and weighted integrability condition at infinity; typically $u \in C^{1,1}$ in a neighborhood of $x$ and $u \in L^1_\omega(\R^N)$, where for measurable set $E \subseteq \R^N$ we denote
$$
L^1_\omega(E) := \left\{ u \in L^1_{loc}(\R^N) : \int_{E} |u| \omega < +\infty \right\}, \ \mbox{with} \ \omega(y) := \frac{1}{(1 + |y|)^{N + 2s}}.
 $$
 
 We focus on the subclass $\mathcal K \subset \mathcal L_0$ of kernels with the form
\begin{equation}\label{Kestable}
K(z) = \frac{a(\hat z)}{|z|^{N + 2s}}, \quad z \neq 0, \ \hat z = z/|z|,
\end{equation}
for some nonnegative, measurable function $a: S^{N - 1} \to \R$. In this setting, condition~\eqref{ellipticity} turns out to be $\gamma \leq a \leq \Gamma$. 

Let us consider a  two-parameter family of kernels $\{ K_{ij}\}_{i \in I, j \in J} \subseteq \mathcal K$, and denote $L_{ij} := L_{K_{ij}}$ with $K_{ij}(z) = a_{ij}(\hat z) |z|^{-(N + 2s)}$ for $z \neq 0$. We can now define nonlinear, nonlocal operators of the Bellman-Isaacs form 
\begin{align} \label{opp}
	 \I u(x):= \inf\limits_{i\in I} \sup\limits_{j \in J} L_{ij}u (x).
\end{align}

In terms of applications, it is known that linear operators  $L_K$ as defined here act as infinitesimal generator of $2s$-stable L\'evy processes, see~\cite{Duke}. Nonlinear operators like~\eqref{opp} arise naturally in the context of stochastic control problems and games, see~\cite{Pham, OS}.

Here and in the rest of the paper $d(x) = \mathrm{dist}(x, \partial \Omega)$ will denote the distance to the boundary of  $\Omega$. We can now state our first  main result.
\begin{teo}\label{teo1}
Let $\Omega \subset \R^N$ be a bounded domain with $C^2$ boundary. Let $\I$ be an operator with the form~\eqref{opp}. Then, for every $h \in C(\partial \Omega)$, there exists  a unique viscosity solution $u \in C^{1, \alpha}(\Omega)$ of the Dirichlet problem
\begin{align}\label{eq}
\left\{\begin{array}{rll}
\I u & = 0 & \mbox{in} \  \Omega,\\
u & = 0 &  \mbox{in} \  \bar \Omega^c, \\
\lim \limits_{z \to x, \ z \in \Omega} d(z)^{1 - s} u(z) & = h(x) & \mbox{for} \ x \in \partial \Omega. 
\end{array}\right.
\end{align}

Moreover, there exists a modulus of continuity $\tilde m$ depending on the data such that, for each $x_0 \in \partial \Omega$ and all $x \in \Omega$ close to $x_0$, we have
$$
|d^{1 - s}(x)u(x) - h(x_0)| \leq \tilde m(|x - x_0|).
$$
\end{teo}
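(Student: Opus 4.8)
The plan is to construct the solution via Perron's method, using carefully chosen barriers that encode the desired blow-up rate $d^{1-s}$. The natural first step is to understand the model problem on the half-space or on a ball: one verifies, as in Lemma~3.1.5 of~\cite{aba} for the linear case, that the function $x \mapsto d(x)^{1-s}$ (suitably cut off and localized) is, up to lower-order corrections, a solution of $\I v = 0$ near the boundary. More precisely, I would exploit the translation- and scaling-invariance of the operators $L_{ij}$ (they act on the homogeneous class $\mathcal{K}$) together with the known one-dimensional computation that $(x_N)_+^{s}$ is $s$-harmonic in the half-space and $(x_N)_+^{s-1}$ is the singular $s$-harmonic function; the exponent $1-s$ in the statement is exactly the one giving the borderline non-integrable-but-admissible profile. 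This yields, for any constant $c$, approximate barriers $w_c^{\pm}(x) = c\, d(x)^{1-s}\pm \text{(controlled error)}$ satisfying $\I w_c^+ \le 0 \le \I w_c^- $ in a one-sided neighborhood of $\partial\Omega$, in the viscosity sense. The $C^2$ regularity of $\partial\Omega$ is used here to replace $d$ by a smooth defining function and to control curvature terms.

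\textbf{Key steps.} (1) \emph{Boundary barriers near a fixed point $x_0$.} Given $h\in C(\partial\Omega)$ and $\varepsilon>0$, pick on a boundary neighborhood $U$ of $x_0$ a smooth function comparing with $(h(x_0)\pm\varepsilon)d(x)^{1-s}$ from above/below, extended appropriately, and correct it by a large multiple of an auxiliary function (e.g. a power $d^{1-s+\delta}$ or $d^{1-s}$ times a defining function) so that it becomes a genuine super/subsolution of $\I\,\cdot\,=0$ in $U\cap\Omega$, dominates any admissible competitor on $\partial U\cap\Omega$, and has zero exterior values matched outside $\bar\Omega$. (2) \emph{Global sub/supersolutions.} Glue these local boundary barriers to a bounded global super/subsolution in the interior via the standard $\min$/$\max$ operations, which preserve the viscosity super/subsolution property for the Isaacs operator; this produces an ordered pair $\ubar{u}\le \bar u$ with the right exterior condition $=0$ in $\bar\Omega^c$ and with $d^{1-s}\ubar u, d^{1-s}\bar u$ pinched between $h(x_0)\mp\varepsilon$ near each $x_0$. (3) \emph{Perron.} Define $u=\sup\{v : v \text{ subsolution}, \ \ubar u\le v\le \bar u\}$; the standard nonlocal Perron argument (as in~\cite{BChI,CS}) shows $u$ is a viscosity solution of $\I u=0$ in $\Omega$, with $u=0$ in $\bar\Omega^c$. (4) \emph{Boundary blow-up.} The pinching from the barriers forces $\limsup$ and $\liminf$ of $d^{1-s}u$ at $x_0$ both to lie in $[h(x_0)-\varepsilon,h(x_0)+\varepsilon]$; letting $\varepsilon\to 0$ gives the limit $h(x_0)$, and tracking the moduli of continuity of $h$ and of the barrier construction yields the quantitative estimate with modulus $\tilde m$. (5) \emph{Interior regularity.} Away from $\partial\Omega$, $u$ is locally bounded and solves the uniformly elliptic nonlocal Isaacs equation, so the regularity theory for~\eqref{opp} (Caffarelli--Silvestre type $C^{1,\alpha}$ estimates) gives $u\in C^{1,\alpha}_{loc}(\Omega)$. (6) \emph{Uniqueness.} This follows from a comparison principle: if $u_1,u_2$ are two solutions, then for each $\varepsilon>0$ the function $(1+\varepsilon)u_1$ (or a shifted multiple) dominates $u_2$ near $\partial\Omega$ because the blow-up rates agree, so comparison on the bounded region where both are finite, combined with the matching exterior data, forces $u_1\ge u_2$ after sending $\varepsilon\to 0$; symmetry gives equality. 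The key point is that the singular part of the two solutions cancels to leading order, so their difference is controlled by a subsolution of a linear equation vanishing on the boundary.

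\textbf{Main obstacle.} The delicate part is Step~(1)--(2): constructing barriers that are simultaneously (a) genuine viscosity super/sub solutions of the \emph{nonlocal} operator up to the boundary, (b) compatible with the prescribed zero exterior data (so the nonlocal tail contributions are correctly signed), and (c) sharp enough that the ratio $d^{1-s}u$ is squeezed to $h(x_0)$ rather than merely bounded. Because the operator is nonlocal, evaluating $\I$ on a candidate barrier requires controlling the integral over \emph{all} of $\R^N$, including the region where the barrier has been glued or truncated; one must check that the error terms from the curvature of $\partial\Omega$, from the cutoff, and from replacing $h$ by the constant $h(x_0)$ are all of lower order than $d^{1-s}$ as $d\to0$. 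The homogeneity structure~\eqref{Kestable} of the kernels in $\mathcal{K}$ is what makes this tractable: it lets one compare $\I$ applied to $d^{1-s}$ with the corresponding half-space computation uniformly in $i,j$, so the ellipticity constants $\lambd,\Lambd$ control all the relevant quantities.
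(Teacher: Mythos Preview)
Your outline follows the same overall architecture as the paper (Perron's method sandwiched between explicit barriers, interior $C^{1,\alpha}$ from Caffarelli--Silvestre/Kriventsov, uniqueness via comparison), but the execution differs in two places, and in one of them your version has a real gap.

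\textbf{Barrier construction.} You propose \emph{local} barriers near each $x_0\in\partial\Omega$ of the form $(h(x_0)\pm\varepsilon)\,d^{s-1}$ plus corrections, and then a min/max gluing to something global. For a nonlocal operator this gluing is awkward: the value of $\I v(x)$ at $x\in U$ depends on $v$ on all of $\R^N$, so making $\bar u_{x_0}$ a supersolution only in $U\cap\Omega$ and dominating on $\partial U\cap\Omega$ is not enough to guarantee that $\min(\bar u_{x_0},\bar u_0)$ is a supersolution at points of $U$. The paper avoids this entirely by constructing, for each $y\in\partial\Omega$ and each $\eta\in(0,1)$, a \emph{global} supersolution
\[
V_y^\eta(x)=\Big(h(y)+m(\eta)+\tfrac{m(\eta)}{\eta}\,|x-y|\Big)\,d_+^{s-1}(x)+C_2^\eta\,w_2(x),
\]
following Capuzzo-Dolcetta--Leoni--Porretta, and then takes $V=\inf_{y,\eta}V_y^\eta$. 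The linear-in-$|x-y|$ prefactor is what replaces your $\varepsilon$ and simultaneously encodes the modulus of continuity of $h$; the corrector $w_2$ satisfies $\M^+ w_2\le -d^{\tau-2s}$ and absorbs all error terms. The technical heart, which your sketch does not supply, is Lemma~\ref{lematec}: a uniform-in-$K$ estimate for $L_K\big(d^{\tau}\,|x-x_0|^\alpha\big)$. This product estimate is precisely what lets one check that $V_y^\eta$ is a genuine supersolution on all of $\Omega$, with the tail and curvature contributions controlled. Your ``main obstacle'' paragraph identifies the right difficulty but does not indicate how to resolve it; the paper's answer is this lemma.

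\textbf{Uniqueness.} Your argument via $(1+\varepsilon)u_1$ does not work as stated: since $h$ may change sign, $(1+\varepsilon)u_1$ need not dominate $u_2$ near $\partial\Omega$, and ``a shifted multiple'' does not fix this for a blowing-up function. The paper instead proves a direct comparison principle (Proposition~\ref{propcomparison}): any subsolution of~\eqref{eqdefsol} with $f,g,h\equiv 0$ is $\le 0$. Applied to $u_1-u_2$ (which, by uniform ellipticity, is a subsolution for $\M^+$ with zero data), this gives $u_1\le u_2$, and symmetry finishes. The proof of that proposition again uses a barrier $\epsilon\,d^{s-1}+\eta\,d^{\tau}+\kappa\,\chi_\Omega$ to push the maximum of the difference into the interior, where it is ruled out. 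This is the step you should replace your multiplicative perturbation with.

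Minor point: throughout your Step~(1)--(2) you write $d(x)^{1-s}$ for the barrier profile; the blow-up function is $d^{s-1}$ (exponent in $(-1,0)$), while $d^{1-s}$ is the normalizing factor in the boundary condition.
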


Here we use the viscosity solution's theory to deal with the problem, see section~\ref{secbasic} for definitions and preliminaries. The existence follows by Perron's method together with comparison principles, which leads to the result after the construction of appropriate barriers that allows us to get the boundary condition. Those barriers are built up by the use of the function $x \mapsto d^{s-1}(x)$ for $x \in \Omega$, and equal to zero in $\Omega^c$, which is close to be $s$-harmonic, see~\cite{CFQ, DQTe}; together with the arguments of Capuzzo-Dolcetta, Leoni and Porretta~\cite{CDLP} (see also \cite{crandall}) to provide the blow-up boundary profile to the solution. Interior regularity is a consequence of well-known elliptic estimates, see Caffarelli and Silvestre~\cite{CS}.

We have chosen to  treat the (simpler) case of fractional harmonic functions with zero exterior condition. The non-homogeous case can be treated along the same lines presented here: for instance, in the case of~\eqref{eqzero}, it is possible to construct large solutions in combination with the bounded solution $v$ for the Dirichlet problem $\Delta^s v = f$ in $\Omega$, with exterior condition $v = g$ in $\Omega^c$ if $f$ and $g$ are, say, bounded and continuous, see~\cite{BChI}. The boundedness of $v$ does not disturb the limit boundary condition. In the nonlinear case, this procedure can be performed by using the extremal Pucci operators associated with the family $\mathcal K$, namely
$$
\mathcal M^+ u = \sup_{K \in \mathcal K}  L_K u; \quad \mathcal M^- u = \inf_{K \in \mathcal K}  L_K u,
$$
further details can be see in Corollary \ref{corfgh} and its proof.

The techniques introduced here can be applied to other family of kernels where the ``fundamental exponent" $s - 1$ changes into some $\tilde s - 1$ where $\tilde s \in (0,1)$ depends on $s$, the ellipticity constants and the family of kernels defining the nonlocal operator, see~\cite{FQ, Duke}.

%


\medskip

Our second main result states gradient estimates on the boundary.
\begin{teo}\label{teo2}
Assume $h \in C(\partial \Omega)$ and let $u$ be the unique solution to~\eqref{eq}. Then, there exists  constants $C_1 > 0$ and $\delta > 0$ such that
$$
|Du(x)| \leq C_1 d^{s-2}(x) \quad \mbox{for all} \ x \in \Omega_\delta,
$$
where $\Omega_\delta=\{x\in\Omega\text{ s.t. }\text{dist}(x,\partial\Omega<\delta)\}$. 
Furthermore, if  $\inf_{\partial \Omega} |h| > 0$ then there exists a constant $C_1$ such that
$$
C_2 d(x)^{s - 2} \leq |Du(x)| \leq C_1 d^{s-2}(x) \quad \mbox{for all} \ x \in \Omega_\delta.
$$
\end{teo}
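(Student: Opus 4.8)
The plan is to obtain the gradient bounds by a rescaling argument combined with the interior $C^{1,\alpha}$ estimates for the operator $\I$ (as in Caffarelli--Silvestre~\cite{CS}), exactly as is done classically for large solutions of second-order equations by Capuzzo-Dolcetta, Leoni and Porretta~\cite{CDLP}. The key observation is that $\I$ is invariant (up to a scalar factor) under the natural parabolic-type scaling of order $2s$, so one can zoom in near a boundary point and transfer estimates on a unit ball back to the original domain. The modulus-of-continuity bound $|d^{1-s}(x)u(x) - h(x_0)| \le \tilde m(|x-x_0|)$ from Theorem~\ref{teo1} provides the needed two-sided control on the size of $u$ on the rescaled balls.

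First I would fix $x \in \Omega_\delta$ with $\delta$ small enough (so that $d$ is $C^2$ there and the nearest-boundary-point projection is well defined), set $r = d(x)/2$, and let $x_0 \in \partial\Omega$ be the closest boundary point. On the ball $B_r(x)$ we have $d \sim r$, so by Theorem~\ref{teo1} the solution satisfies $|u| \le C r^{s-1}$ on $B_r(x)$ (and, if $\inf_{\partial\Omega}|h| > 0$, also $|u| \ge c\, r^{s-1}$ somewhere near $x$, after possibly shrinking the scale, using $\tilde m$ to know $h(x_0) \ne 0$ forces $u$ of order $r^{s-1}$). Define the rescaled function $v(y) := r^{1-s} u(x + r y)$ for $y \in \R^N$. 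Then $v$ solves an equation $\tilde\I v = 0$ in $B_1$, where $\tilde\I$ is an operator of the same Bellman--Isaacs form~\eqref{opp} with the same ellipticity constants $\lambd, \Lambd$ (the scaling preserves the class $\mathcal K$ because the kernels in~\eqref{Kestable} are homogeneous of degree $-(N+2s)$); moreover $v$ is bounded in $L^\infty(B_1)$ by a universal constant $C$, and — crucially — $v \in L^1_\omega(\R^N)$ with a universal bound, since on $B_1^c$ the function $v(y) = r^{1-s}u(x+ry)$ is controlled by $u$'s growth, which near $\partial\Omega$ is at most $d^{s-1}$ and which is $0$ outside $\bar\Omega$; one checks $\int_{\R^N} |v(y)|\omega(y)\,dy \le C$ uniformly in $r$.

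Then the interior $C^{1,\alpha}$ estimate of~\cite{CS} applied to $v$ on $B_{1/2}$ gives $|Dv(0)| \le C(\|v\|_{L^\infty(B_1)} + \|v\|_{L^1_\omega(\R^N)}) \le C$ with $C$ universal. Unwinding the scaling, $Dv(0) = r^{2-s} Du(x)$, so $|Du(x)| \le C r^{s-2} = C' d^{s-2}(x)$, which is the upper bound. For the lower bound, assume $m_0 := \inf_{\partial\Omega}|h| > 0$. By continuity of $h$ and Theorem~\ref{teo1}, for $\delta$ small enough and $x \in \Omega_\delta$ we have $d^{1-s}(x)u(x)$ within $m_0/2$ of $h(x_0)$, hence $|u(x)| \ge (m_0/2) d^{s-1}(x)$. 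Now I would exploit that $u$ is comparable to the barrier $d^{s-1}$, which has gradient of exact order $d^{s-2}$: writing $u = \pm|h(x_0)| d^{s-1} + (\text{lower order})$ and using the $C^{1,\alpha}$ interior estimate on the \emph{difference} quotient — i.e. estimating the oscillation of $v - (\text{rescaled barrier})$, which tends to $0$ as $r \to 0$ by the modulus $\tilde m$ — shows $Dv(0)$ is within $o(1)$ of the gradient of the model function, which is bounded below by a positive universal constant. Rescaling back yields $|Du(x)| \ge C_2 d^{s-2}(x)$.

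The main obstacle I anticipate is the uniform $L^1_\omega$ control of the rescaled function $v$ on $B_1^c$: one must carefully track how $u$'s boundary blow-up interacts with the rescaling, since $u$ is unbounded near $\partial\Omega$ and the tail of $v$ mixes contributions from points at distance comparable to $r$ (where $u \sim r^{s-1}$, matching the normalization) with points far from $x$ inside $\Omega$ (where $u$ may be large near other boundary pieces) and points outside $\bar\Omega$ (where $u = 0$). Handling this requires splitting $B_1^c$ into the region near $x_0$, where the local blow-up profile and $\tilde m$ give the bound, and the region far from $x_0$, where $\omega$'s decay of order $N+2s$ absorbs the at-most-polynomial growth of $r^{1-s}|u(x+ry)|$. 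A secondary technical point is justifying the precise form of the rescaled operator $\tilde\I$ and that the $C^{1,\alpha}$ estimate of~\cite{CS} applies with constants independent of the particular family $\{K_{ij}\}$ — this is standard given~\eqref{ellipticity} but should be stated carefully. The lower bound also needs the comparison of $u$ with $d^{s-1}$-type barriers from the proof of Theorem~\ref{teo1} to be quantitative enough to feed into the rescaled difference estimate.
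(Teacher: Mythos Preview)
Your upper-bound argument is essentially the paper's: rescale, invoke interior $C^{1,\alpha}$ estimates, control the weighted tail, scale back. Two minor remarks: the paper cites Kriventsov~\cite{K} rather than~\cite{CS} for the $C^{1,\alpha}$ estimate (since the kernels in $\mathcal K$ are merely measurable in the angular variable), and the tail bound is obtained by splitting at scale $|y|\sim 1/r$ and using $u\in L^1(\Omega)\cap L^1_\omega(\R^N)$ directly, which is a bit cleaner than your proposed boundary/far-field decomposition.

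For the lower bound the paper takes a different route. It argues by contradiction and compactness: assuming a sequence $z_j$ with $d_j^{2-s}|Du(z_j)|\to 0$, one rescales $v_j(y)=d_j^{1-s}u(\hat z_j+d_j y)$ around the boundary projection $\hat z_j$, uses the upper-bound step to get uniform local $C^{1,\alpha}$ bounds on $v_j$, and uses the modulus $\tilde m$ from Theorem~\ref{teo1} to identify the pointwise limit as $V(y)=h(0)(y_N)_+^{s-1}$ on the half-space. Arzel\`a--Ascoli then upgrades this to $C^1_{loc}$ convergence, so $Dv_j(e_N)\to (s-1)h(0)e_N\neq 0$, contradicting the assumption.

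Your direct approach---apply $C^{1,\alpha}$ estimates to the difference $v-w$ with $w$ the rescaled model---has a genuine gap. Since $\I$ is fully nonlinear, $v-w$ does not solve $\I=0$; at best it lies between Pucci sub- and supersolutions with right-hand side $\pm\I w$. To conclude $|D(v-w)(0)|=o(1)$ from interior estimates you would need $\|v-w\|_{L^1_\omega(\R^N)}\to 0$, not just $\|v-w\|_{L^\infty(B_1)}\to 0$. But the tail of $v-w$ does \emph{not} vanish: for $y$ with $x+ry$ near a distant portion of $\partial\Omega$, $v(y)$ sees the blow-up governed by $h$ at that far boundary point, while $w(y)$ is built from $h(x_0)$, and the mismatch is $O(1)$ after normalization. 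The compactness argument avoids this entirely because it never subtracts: it bounds $v_j$ itself uniformly in $C^{1,\alpha}_{loc}$ (tail of $v_j$ is merely bounded, which suffices), and lets the pointwise limit do the identification. Your subtraction idea can be salvaged, but only by effectively rediscovering the compactness step.
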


In the previous theorem, upper and lower bound are obtained in different ways. For the upper bound, we rescale the solution near the boundary and employ $C^{1,\alpha}$ interior estimates of Kriventsov~\cite{K}. The lower bound is obtained by a contradiction argument together with scaling and compactness properties of the problem, leading us to a limit problem set up in the half-space that  is not compatible 
with the contradiction assumption.

This estimates can be employed in the extension of this result to semilinear problems involving nonlinearities like $f(u, D u, x)$, see~\cite{CFQ, DQTe}, but we do not pursue in this direction here. 

\medskip

The paper is organized as follows: in Section~\ref{secbasic} we introduce the basic notation and the notion of solution. In Section \ref{sec2} we discuss Perron's method and prove a comparison principle suitable for our purposes. In Section \ref{sec3} we present technical estimates which are going to be useful to construct the barriers. Section \ref{sec4} is completely devoted to the proof of Theorem \eqref{teo1}. Finally, in Section \ref{sec6} we provide the proof of Theorem \eqref{teo2}.


\section{Basic notation and notion of solution.}\label{secbasic}

From now on, for $\tau \in \R$, and $x \in \Omega$, we use the notation
$$
d^\tau(x) = \left \{ \begin{array}{ll} d(x)^\tau & \quad \mbox{if} \ x \in \Omega, \\ 0 & \quad \mbox{if} \ x \in \Omega^c. \end{array} \right .
$$

We start with some notation. For $r > 0$ and $x \in \R^N$ we denote $B_r(x)$ the ball with center $x$ and radius $r$.

For $\delta>0$ we will denote 
\[
\Omega_{\delta}=\{x\in\Omega : \text{dist}(x,\partial \Omega)<\delta\}.
\]

With some abuse of notation, we will denote the extremal operators in the class $\mathcal K$ by $\mathcal M^\pm=\mathcal M^\pm_{\mathcal K}$, see Caffarelli and Silvestre~\cite{CS}.

Now we present our notion of boundary blow-up viscosity solution. For this, we consider $\Omega \subset \R^N$, $f \in C(\Omega), h \in C(\partial \Omega)$ and $g \in C(\bar \Omega^c) \cap L_\omega(\R^N)$, and look for the problem
\begin{align}\label{eqdefsol}
\left\{\begin{array}{rll}
\I u & = f & \mbox{in} \  \Omega,\\
u & = g &  \mbox{in} \ \bar \Omega^c, \\
\lim \limits_{z \to x, \ z \in \Omega} d(z)^{1 - s} u(z) & = h(x) & \mbox{for} \ x \in \partial \Omega.
\end{array}\right.
\end{align}

We start with the definition of viscosity solution in the domain $\Omega \subseteq \R^N$.
\begin{defi}\label{defi1}
We say that $u \in L^1_{\omega}(\R^N) \cap L^\infty_{loc}(\Omega)$, upper semicontinuous (resp. lower semicontinuous) in $\Omega$, is a viscosity subsolution (resp. supersolution) to the equation
\begin{equation}\label{Iu=f}
\I u = f \quad \mbox{in} \ \Omega,
\end{equation}
if for each $x_0 \in \Omega$, $\delta > 0$ such that $B_\delta(x_0) \subset \Omega$, and each $\varphi \in C^2(B_\delta(x_0))$ such that $x_0$ is a maximum point (resp. minimum point) of $u - \varphi$ in $B_\delta(x_0)$, then
\begin{equation*}
\I_\delta(u, \varphi, x_0)  \geq (resp. \ \leq) f(x_0),
\end{equation*}
where we have denoted
$$
\I_\delta(u, \varphi, x_0) := \inf_{i \in I} \sup_{j \in J} \{ L_{ij}[B_\delta(x_0)] \varphi(x_0) + L_{ij}[B_\delta^c(x_0)]u(x_0) \},
$$
and 
for each $A \subseteq \R^N$ measurable, we have denoted
$$
L_{ij}[A]u(x) = \mathrm{P.V.} \int_{A} [u(y) - u(x)]K(x - y)dy.
$$ 

A viscosity solution $u \in C(\Omega) \cap L^1_\omega(\R^N)$ to~\eqref{Iu=f}  is a function which is a simultaneously a viscosity sub and supersolution to the problem.
\end{defi}

We notice that each classical solution $u \in C^2(\Omega) \cap L^1_\omega(\Omega)$ to~\eqref{Iu=f} is a viscosity solution to the problem. The requirement that $B_\delta(x_0) \subset \Omega$ is not restrictive, in the sense that if $\varphi \in C^2(\Omega \cap B_\delta(x_0)) \cap L^1_\omega(\R^N)$ is a test function for $u$ (say, subsolution) in $B_\delta(x_0)$ and the inequality
$$
\I_{\delta'}(u, \varphi, x_0) \geq f(x_0)
$$
holds for $\delta' < \delta$, then $\I_{\delta}(u, \varphi, x_0) \geq f(x_0)$.

We say that $u$ is a strict subsolution to the problem if the exterior and boundary conditions in the definition above are satisfied, and there exists $\epsilon > 0$ such that, for each $x_0 \in \Omega$ and each test function $\varphi$ as above, we have
\begin{equation*}
\I_\delta(u, \varphi, x_0) \geq \epsilon.
\end{equation*}

In the same way, we define strict supersolution. We say that $u$ is a local strict subsolution if for each $\Omega' \subset \subset \Omega$, $u$ is a strict subsolution in $\Omega'$. In the same way it is defined strict, and locally strict supersolution.

Our definition is a natural extension of the definition given by Barles, Chasseigne and Imbert in~\cite{BChI}.
\begin{defi}\label{defDirichlet}
We say that $u \in L^1_{\omega}(\R^N) \cap L^\infty_{loc}(\Omega)$, upper semicontinuous (resp. lower semicontinuous) in $\Omega$, is a viscosity subsolution (resp. supersolution) to the Dirichlet problem~\eqref{eqdefsol} if $u$ is a viscosity subsolution (resp. supersolution) to~\eqref{Iu=f} in the sense of Definition~\ref{defi1}, $u \leq (resp. \ \geq) \ g$ in $\bar \Omega^c$,  and for all $x \in \partial \Omega$ we have
$$
\limsup \limits_{z \to x, z \in \Omega} d^{1-s}(z) u(z) \leq h(x), \ (resp. \liminf \limits_{z \to x, z \in \Omega} d^{1-s}(z) u(z) \geq h(x)).
$$

A viscosity solution to~\eqref{eqdefsol} is a function which is a viscosity sub and supersolution to~\eqref{eqdefsol} simultaneously.
\end{defi}

\section{Perron's method and Comparison principles}\label{sec2}
We start this section we employ the following Perron type result.
\begin{prop}\label{Perron}
Let $\bar U, \underbar U \in L^1_\omega(\R^N) \cap C(\R^N \setminus \partial \Omega)$ be respectively super and subsolution to~\eqref{Iu=f} such that  $\underline U \leq \bar U$ in $\Omega$, $\bar U = \underbar U = g$ in $(\bar \Omega)^c$, and such that one of them is strict.
Then, there exists a function $u$ solving~\eqref{Iu=f}, with $u = g$ in $(\bar \Omega)^c$ and such that $\underline U \leq u \leq \bar U$ in $\Omega$.
\end{prop}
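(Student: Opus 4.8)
The plan is to adapt the classical Perron method to this nonlocal viscosity setting. Define
\[
u(x) := \sup \{ w(x) : \underbar U \leq w \leq \bar U \text{ in } \Omega, \ w = g \text{ in } (\bar \Omega)^c, \ w \text{ is a subsolution to } \eqref{Iu=f} \},
\]
so that the supremum is taken over a nonempty family (it contains $\underbar U$) and is bounded above by $\bar U$. The proof then has three parts: (i) the upper semicontinuous envelope $u^*$ is still a subsolution and satisfies $\underbar U \leq u^* \leq \bar U$ (hence $u^* = g$ in $(\bar\Omega)^c$ by continuity of the barriers there), so in fact $u^* = u$, i.e. $u$ is itself a subsolution; (ii) the lower semicontinuous envelope $u_*$ is a supersolution; (iii) $u_* = u^*$, whence $u = u_* = u^*$ is continuous and a solution.

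For (i), I would use the standard stability argument for viscosity subsolutions. Given a test function $\varphi \in C^2(B_\delta(x_0))$ touching $u^*$ from above at $x_0 \in \Omega$ (with $B_\delta(x_0) \subset \Omega$), I take a sequence $x_k \to x_0$ and subsolutions $w_k$ in the family with $w_k(x_k) \to u^*(x_0)$. One produces, after a small quadratic perturbation of $\varphi$ that makes $x_0$ a strict max, nearby points $y_k \to x_0$ where $w_k - \varphi$ has a local maximum in $B_\delta(x_0)$; applying the subsolution inequality for $w_k$ gives $\I_\delta(w_k, \varphi, y_k) \geq f(y_k)$. The key point is to pass to the limit in the nonlocal term $L_{ij}[B_\delta^c(x_0)] w_k(y_k)$: here one uses that $w_k \leq \bar U$ and $w_k \geq \underbar U$ on $B_\delta^c(x_0)$ together with the $L^1_\omega$ bound and dominated convergence along a subsequence, plus upper semicontinuity to handle the $\limsup$ correctly against the infimum over $i$ and supremum over $j$ (this is exactly the kind of estimate in~\cite{BChI}). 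One concludes $\I_\delta(u^*, \varphi, x_0) \geq f(x_0)$. The symmetric-in-spirit but genuinely different argument is needed for (ii): if $u_*$ failed to be a supersolution at some $x_0 \in \Omega$, then a standard bump construction produces a subsolution $w$ with $w > u$ somewhere near $x_0$ while still $w \leq \bar U$ and $w = g$ outside, contradicting maximality of $u$; making this bump respect the nonlocal operator and the upper barrier $\bar U$ is where the ellipticity of the kernels in $\mathcal L_0$ and the strictness hypothesis on one of $\bar U, \underbar U$ enter.

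For (iii), once $u$ is a (bounded, by the barriers) subsolution and $u_*$ a supersolution with $u_* \leq u^* = u$ in $\Omega$, $u = u_* = g$ in $(\bar\Omega)^c$, and one of the barriers is strict, the comparison principle (the next result of the paper, or its local version) forces $u \leq u_*$ in $\Omega$, hence $u = u_* = u^* \in C(\Omega)$ and $u$ solves~\eqref{Iu=f} with the prescribed exterior data. Strictly speaking one should note that the strictness needed to run comparison is transferred appropriately: since $\underbar U \leq u \leq \bar U$, whichever of the two is strict lies on the correct side of $u$, so the comparison argument applies to the pair $(u, u_*)$ or $(u^*, \bar U)$ as needed.

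The main obstacle I expect is part (ii), namely the bump-construction step showing $u_*$ is a supersolution: in the nonlocal setting the perturbation used to violate maximality must be a global modification (it changes the candidate on a whole ball, and the nonlocal operator sees the values on $B_\delta^c(x_0)$), so one must check simultaneously that the bumped function is still below $\bar U$ on all of $\Omega$, still equals $g$ outside, and still satisfies the subsolution inequality everywhere — the last point requiring that on the ball where we bump up the inequality is preserved thanks to the sub/supersolution structure of $\I$ and the ellipticity~\eqref{ellipticity}, and away from that ball that the added positive nonlocal contribution only helps. The continuity of $\bar U, \underbar U$ across $\partial\Omega$ (they are in $C(\R^N \setminus \partial\Omega)$ and agree with $g$ there) is what keeps the exterior condition intact in the limit; the boundary blow-up condition itself plays no role in Proposition~\ref{Perron} since only the interior equation and the exterior datum are asserted.
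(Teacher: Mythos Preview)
Your approach differs from the paper's. The paper does not give a self-contained proof but refers to~\cite{DQTe}, sketching the argument as: solve~\eqref{Iu=f} on the inner subdomains $\Omega_k = \Omega \setminus \Omega_{1/k}$, where $\bar U$ and $\underbar U$ are bounded and continuous up to $\partial\Omega_k$, so standard existence and comparison for bounded viscosity solutions yield $u_k$ with $\underbar U \leq u_k \leq \bar U$ on $\Omega_k$; then let $k \to \infty$ and use stability of viscosity solutions together with interior estimates. Your proposal is instead the direct Perron construction on all of $\Omega$.

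Steps (i) and (ii) are standard and adapt to the nonlocal setting along the lines you describe. The genuine gap is in step (iii). You call $u$ ``bounded, by the barriers'', but this is false here: $\bar U, \underbar U$ lie only in $L^1_\omega(\R^N) \cap C(\R^N \setminus \partial\Omega)$ and may (and in the intended application do) blow up like $d^{s-1}$ near $\partial\Omega$. To run the comparison of Proposition~\ref{propcomparison} on the pair $(u^*, u_*)$---or, in the nonlinear case, on $u^* - u_*$ via $\M^+(u^* - u_*) \geq 0$---you must verify the boundary hypothesis $\limsup_{z \to x} d^{1-s}(z)\,(u^*(z) - u_*(z)) \leq 0$ for every $x \in \partial\Omega$. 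From your construction you only know $0 \leq u^* - u_* \leq \bar U - \underbar U$, and Proposition~\ref{Perron} as stated does \emph{not} assume that $d^{1-s}(\bar U - \underbar U) \to 0$; without this the supremum of $u^* - u_*$ can escape to $\partial\Omega$ and the interior-maximum argument behind Proposition~\ref{propcomparison} does not close. The strictness of one barrier does not repair this, since strictness concerns the differential inequality in $\Omega$, not the boundary blow-up rate. The paper's inner-approximation route sidesteps exactly this difficulty: on each $\Omega_k$ the barriers are bounded, ordinary comparison applies, and continuity of the limit in $\Omega$ comes from interior regularity and stability rather than from a global comparison on $\Omega$.
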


The proof  of this result can be found in \cite{DQTe} by stability of viscosity solutions for a sequence of problems defined in inner subdomains $\Omega_k = \Omega \setminus \Omega_{1/k}$ and take limit as $k \to \infty$. 

\begin{prop}\label{propcomparison}
Let $\Omega \subset \R^N$ be a bounded domain with smooth boundary. Let $u$ be a viscosity subsolution to the Dirichlet problem~\eqref{eqdefsol} with $f, g, h \equiv 0$.
Then $u\leq0$ in $\Omega$.
\end{prop}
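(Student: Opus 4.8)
The plan is to prove the comparison principle (Proposition~\ref{propcomparison}) by combining a boundary barrier argument — to control $u$ near $\partial\Omega$ where the limit boundary condition $h\equiv 0$ lives — with an interior maximum principle argument that rules out a positive interior maximum. The key difficulty, as usual in blow-up problems, is that $u$ need not be bounded near $\partial\Omega$, so one cannot directly apply a global comparison; the role of the boundary condition $\limsup_{z\to x} d^{1-s}(z)u(z)\le 0$ is precisely to say $u$ does not blow up \emph{up} near the boundary in the relevant rate.

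First I would establish the behavior near $\partial\Omega$. Fix $\eta>0$ and consider the barrier built from the function $d^{s-1}$ (which, as recalled in the introduction, is an approximate $s$-harmonic function; more precisely $\mathcal M^-(d^{s-1}) \ge -C$ or $\mathcal M^+(d^{s-1})\le C$ in a neighborhood $\Omega_\delta$ of $\partial\Omega$ up to a bounded error). Using the ellipticity of $\I$ between $\mathcal M^-$ and $\mathcal M^+$, the function $w_\eta := \eta\, d^{s-1} + K d^{s}$ (or a similar combination, with $K$ large enough that the bounded error from $d^s$ dominates) is a strict supersolution of $\I w_\eta \le 0$ in $\Omega_\delta$, vanishes in $\bar\Omega^c$, and satisfies $d^{1-s} w_\eta \to \eta > 0 \ge h = 0$ on $\partial\Omega$. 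Since $\limsup_{z\to x} d^{1-s}(z)u(z)\le 0 < \eta$ for every $x\in\partial\Omega$, for any $\eta$ there is a smaller neighborhood on whose inner boundary $\{d = \delta'\}$ (a compact subset of $\Omega$, where $u$ is bounded above, say by $M$) one can further add a constant $M\delta'^{1-s}$-type term, or simply absorb $u$ by taking $\delta'$ small. The upshot I want: for every $\eta>0$, $u \le w_\eta$ on $\Omega_{\delta'}$, hence $\limsup_{x\to\partial\Omega} u(x)/d^{s-1}(x) \le \eta$, and in particular $u$ is bounded above on $\Omega$ — indeed $\sup_\Omega u < +\infty$, with the supremum not attained on (a neighborhood of) $\partial\Omega$ unless it is $\le 0$.

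With $u$ bounded above, I would argue by contradiction: suppose $\sup_\Omega u =: \theta > 0$. By the previous step the supremum is attained at some interior point $x_0\in\Omega$ (the boundary layer contributes values $\le \eta\, d^{s-1}$, which is small near $\partial\Omega$, so for $\theta>0$ a maximizing sequence stays in a compact subset of $\Omega$; upper semicontinuity gives an interior maximum point). At $x_0$, the constant function $\varphi\equiv u(x_0)$ is a valid $C^2$ test function touching $u$ from above on any $B_\delta(x_0)\subset\subset\Omega$, so the subsolution inequality gives
$$
\I_\delta(u,\varphi,x_0) = \inf_{i}\sup_j\Big\{ L_{ij}[B_\delta(x_0)]\varphi(x_0) + L_{ij}[B_\delta^c(x_0)]u(x_0)\Big\} \ge 0.
$$
But $L_{ij}[B_\delta(x_0)]\varphi(x_0) = 0$ since $\varphi$ is constant, and for each $i,j$,
$$
L_{ij}[B_\delta^c(x_0)]u(x_0) = \int_{B_\delta^c(x_0)} [u(y) - u(x_0)]\,K_{ij}(x_0 - y)\,dy \le 0,
$$
because $u(y)\le \theta = u(x_0)$ for $y\in\Omega$ and $u(y) = g(y) = 0 < \theta$ for $y\in\bar\Omega^c$, while the kernel is nonnegative. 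Hence $\I_\delta(u,\varphi,x_0)\le 0$, and combined with the subsolution inequality it forces equality, i.e. $u(y) = u(x_0)$ for a.e.\ $y$ in the support — this already contradicts $u = 0 \neq \theta$ outside $\bar\Omega$ (the $B_\delta^c(x_0)$ region meets $\bar\Omega^c$, where the integrand is $-\theta < 0$ on a set of positive measure, making $L_{ij}[B_\delta^c(x_0)]u(x_0)$ \emph{strictly} negative). This contradiction yields $\theta \le 0$, i.e.\ $u\le 0$ in $\Omega$.

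The main obstacle I anticipate is the rigorous justification of the first step: producing a supersolution barrier of the form $\eta d^{s-1} + (\text{lower order})$ that genuinely dominates $u$ near $\partial\Omega$, which requires (i) the sharp computation that $d^{s-1}$ is an approximate $s$-harmonic function with a controlled (bounded or lower-order) error in a one-sided neighborhood — this is the content of the technical estimates in Section~\ref{sec3} and references \cite{CFQ, DQTe} — and (ii) handling the nonlocal coupling: the value of $w_\eta$ and of $u$ on all of $\R^N$ (not just near $\partial\Omega$) enters $\I w_\eta$, so the comparison on $\Omega_{\delta'}$ must be set up so that $u\le w_\eta$ already holds on $\Omega\setminus\Omega_{\delta'}$ and in $\Omega^c$ before invoking the comparison principle for bounded sub/supersolutions on the fixed domain $\Omega_{\delta'}$ (this is where the strictness hypothesis and a Perron/comparison lemma on a nice subdomain, as in \cite{BChI, DQTe}, is used). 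The interior contradiction step is comparatively soft and robust.
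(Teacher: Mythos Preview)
Your Step 2 (the interior contradiction) is sound, and in fact gives a clean variant of the paper's endgame: once a positive interior maximum $x_0$ exists, testing with the constant $\theta=u(x_0)$ and using $u=0<\theta$ on the positive-measure set $\bar\Omega^c$ makes each $L_{ij}[B_\delta^c]u(x_0)$ strictly negative, contradicting the subsolution inequality. The gap is in Step~1. First, your barrier $w_\eta=\eta\,d^{s-1}+K d^{s}$ is not a supersolution near $\partial\Omega$: the relevant estimate is $\mathcal M^+ d^{s-1}\le C\,d^{-1}$, which is positive and \emph{diverges}, while $\mathcal M^+ d^{s}$ is only bounded, so no choice of $K$ yields $\I w_\eta\le 0$. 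Second, and more structurally, comparing $u$ with any $w_\eta$ on a strip $\Omega_{\delta'}$ is circular for a nonlocal operator: it requires $u\le w_\eta$ on all of $\Omega\setminus\Omega_{\delta'}$ beforehand, which you do not have. Finally, the hypothesis $\limsup d^{1-s}u\le 0$ by itself gives neither an upper bound for $u$ nor an interior maximizer --- your assertion that ``$\eta\,d^{s-1}$ is small near $\partial\Omega$'' is false, since $d^{s-1}\to+\infty$ there; the hypothesis only says $u\le o(d^{s-1})$, which still allows $u\to+\infty$.

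The paper avoids all of this by \emph{subtracting} barriers from $u$ on the whole domain rather than comparing with a supersolution on a strip. It shows that for each $\kappa>0$ one can choose $\epsilon,\eta>0$ small and $\tau\in(0,s)$ small so that
\[
\tilde u:=u-\epsilon\,d_+^{s-1}-\eta\,d_+^{\tau}-\kappa\,\chi_\Omega
\]
is a \emph{strict} subsolution in all of $\Omega$: the key is $\mathcal M^+ d^{\tau}\le c(\tau)\,d^{\tau-2s}$ with $c(\tau)<0$, which (for $\tau-2s<-1$) dominates the $O(d^{-1})$ error coming from $d^{s-1}$ near the boundary, while $-\kappa\chi_\Omega$ supplies strictness in the interior. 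Because $d^{1-s}u\to 0$, the term $-\epsilon d^{s-1}$ forces $\tilde u\to-\infty$ at $\partial\Omega$, so $\tilde u$ is bounded above and --- assuming $\sup_\Omega u>0$ and taking the parameters small --- attains a positive interior maximum. Testing with the constant there gives $\I_\delta(\tilde u,M,x_0)\le 0$, while strictness gives $\I_\delta(\tilde u,M,x_0)\ge c\kappa>0$, a contradiction. The idea you are missing is precisely this global subtraction, which simultaneously manufactures the interior maximum and sidesteps the nonlocal strip-comparison circularity.
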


\begin{proof}
We start by proving that for each $\kappa > 0$, there exists $\epsilon, \eta > 0$ and $\tau \in (0,s)$ such that, the function
$$
\tilde u = u - \epsilon d^{s - 1}_+ - \eta d_+^\tau - \kappa \chi_\Omega
$$
is a strict subsolution to the problem. Here, $\chi_\Omega$ denotes the indicator function of $\Omega$, and we have assumed that the distance function $d$ is extended as a positive, $C^2$ function in $\Omega$.

Let $x \in \Omega$ and $\varphi \in C^2$ touching $\tilde u$ from above at $x$ inside the ball $B_\delta(x)$. We can assume that $\delta < d(x)$. Then, we have that the function $\tilde \varphi = \epsilon d^{s - 1} + \eta d^\tau + \kappa$ is a test function for $u$, from which we use the viscosity inequality to conclude that
\begin{equation*}
\I_\delta(u,\varphi,x) + \epsilon \M^+ d^{s - 1}(x) + \eta \M^+ d^\tau (x) + \kappa \M^+ \chi_\Omega (x) \geq 0.
\end{equation*} 

Using the estimates from Proposition 3.1 in~\cite{DQTe}, we have the existence of a constant $C > 0$ (just depending on $N, \Omega, s$ and the ellipticity constants) such that
\begin{equation*}
\M^+ d^{s - 1}(x) \leq C d^{-1}(x), \quad \mbox{for all} \ x \in \Omega.
\end{equation*}

Similarly, using the estimates in Lemma 3.1 in~\cite{DQT}, there exists $\tau > 0$ small enough and $c(\tau) < 0$ such that
\begin{equation*}
\M^+ d^{\tau}(x) \leq c(\tau) d^{\tau - 2s}(x), \quad \mbox{for all} \ x \in \Omega_{\delta_0},
\end{equation*}
 meanwhile, for $x$ such that $d(x) \geq \delta_0$ we have $\M^+ d^{\tau}(x) \leq C$ for some $C > 0$. 
 
 Finally, a direct computation shows the existence of $c > 0$ such that
 \begin{equation*}
 \M^- \chi_\Omega(x) \leq -c \ \mathrm{diam}(\Omega)^{-2s} \quad \mbox{for all} \ x \in \Omega.
 \end{equation*}

Gathering the above estimates, we conclude that for all $d(x) \leq \delta_0$ we have
\begin{equation*}
\I_\delta (u, \varphi, x) \geq \kappa c \ \mathrm{diam}(\Omega)^{-2s} - C \epsilon d^{-1}(x) - \eta c(\tau) d^{\tau - 2s}(x),
\end{equation*}
from which, taking $\tau$ small such that $\tau - 2s < -1$ we conclude that
\begin{equation*}
\I_\delta (u, \varphi, x) \geq \kappa c \ \mathrm{diam}(\Omega)^{-2s} > 0 \quad \mbox{for all} \ x \in \Omega_{\delta_0}.
\end{equation*}

For $d(x) > \delta_0$, we have
\begin{equation*}
\I_\delta (u, \varphi, x) \geq \kappa c \  \mathrm{diam}(\Omega)^{-2s} - C_{\delta_0} (\epsilon + \eta),
\end{equation*}
from which, taking $\epsilon, \eta$ small enough, we conclude the result.

With this result at hand, we consider $u$ as in the statement of the proposition and assume by contradiction that 
\[
\sup\limits_{\Omega}u>0.
\]

Then, for $\kappa$ sufficiently small in terms of the above supremum, for $\epsilon, \eta, \tau$ as above, we have $\tilde u$ is a strict subsolution for the problem, $\tilde u \leq 0$ in $(\bar \Omega)^c$, and 
\[ 
0 < \sup_{\Omega} \tilde u = \max_{\Omega} \tilde u < +\infty,
\]

Thus, using the constant function equal to $M := \max_{\Omega} \tilde u = \tilde u(x_0)$ as test function for $\tilde u$, for each $\delta > 0$ small we have
\begin{equation*}
\I_\delta(u, M, x_0) \geq c \kappa > 0, 
\end{equation*}
but by its very definition, we have $\I_\delta (u, M, x_0) \leq 0$, from which we arrive at a contradiction.
%
%
%
%
\end{proof}

%
\section{Barriers}\label{sec3}

In what follows, for $a > 0$, we adopt the usual notation $O(a)$ to denote a quantity for which there exists a constant $C > 0$ not depending on $a$, for which
$$
-C a \leq O(a) \leq Ca.
$$

Notice, in particular, that for $a, b > 0$ we have $O(a) O(b) = O(ab)$.

\begin{lema}\label{lematec}
Let $\alpha \in (0,2s)$, $x_0 \in \partial \Omega$ and denote $\xi_\alpha(x) = |x - x_0|^\alpha$. Then, for each $\tau \in (-1, 2s)$, $K \in \mathcal K$, and each $x \in \Omega$ close to the boundary, we have
\begin{equation}\label{formulaprod}
L_K (d^{\tau} \xi_\alpha)(x) = \xi_\alpha(x) \rho^{\tau - 2s} \Big{(} c_K(\tau) + O(\rho^s) \Big{)} + O(\rho^{\tau + \alpha - 2s}) + O(\rho^\tau), 
\end{equation}
where we have denoted $\rho = d(x)$, and 
$$
c_K(\tau) = \mathrm{P.V.} \int_{\R^N} [(y_N)_+^\tau - 1] K(y - e_N)dy.
$$
\end{lema}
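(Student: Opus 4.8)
The goal is an asymptotic expansion, as $x\to x_0\in\partial\Omega$ from inside $\Omega$, of $L_K(d^\tau\xi_\alpha)(x)$, and the natural strategy is to compare the domain near $x_0$ with a half-space. First I would flatten the boundary: since $\partial\Omega$ is $C^2$, after a rigid motion we may assume $x_0=0$ and the inner normal is $e_N$; near $0$ the set $\Omega$ looks like the half-space $\{y_N>0\}$ up to an error of order $|y'|^2$. Accordingly, write $x=\rho e_N + (\text{lower order})$ with $\rho=d(x)$, and introduce the blow-up/rescaling $y\mapsto \rho y$ in the integral defining $L_K$, using homogeneity of the kernel $K(z)=a(\hat z)|z|^{-(N+2s)}$: a change of variables turns $L_K(d^\tau\xi_\alpha)(x)$ into $\rho^{-2s}$ times an integral of the rescaled profile. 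The key point is that $d^\tau$ rescales, to leading order, like $\rho^\tau (y_N)_+^\tau$ (the distance function is asymptotically $y_N$ after flattening, with $C^2$ corrections), and $\xi_\alpha$ contributes the factor $\xi_\alpha(x)\approx\rho^\alpha$ on the relevant scale but must be tracked carefully because it is not translation invariant.

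The main computation is then to split the integral $L_K(d^\tau\xi_\alpha)(x)=\mathrm{P.V.}\int_{\R^N}[d^\tau\xi_\alpha(x+z)-d^\tau\xi_\alpha(x)]K(z)\,dz$ into a near-field part (say $|z|\le \rho/2$, where Taylor expansion of the $C^2$ corrections of $d$ and of $\xi_\alpha$ applies and the principal value is benign) and a far-field part. On the near-field one expands $d(x+z)^\tau$ and $\xi_\alpha(x+z)$ and checks the contributions are absorbed into the error terms $O(\rho^{\tau+\alpha-2s})$ and $O(\rho^\tau)$; here the product structure $O(a)O(b)=O(ab)$ recalled just before the lemma will be used repeatedly. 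On the far-field the dominant behaviour comes from $|z|\sim\rho$, and after the rescaling $z=\rho w$ one recognizes the constant
$$
c_K(\tau)=\mathrm{P.V.}\int_{\R^N}[(w_N)_+^\tau-1]K(w-e_N)\,dw,
$$
which is exactly the half-space computation for the pure power $d^\tau$ with $\xi_\alpha$ frozen at its value $\xi_\alpha(x)$; the discrepancy between $\Omega$ and the half-space produces the $O(\rho^s)$ relative correction inside the parentheses (this is where the $C^2$ regularity of $\partial\Omega$ enters quantitatively, giving a gain of one power of $\rho$ relative to the scale $\rho^{-2s}$, and where the constraint $\alpha\in(0,2s)$, $\tau\in(-1,2s)$ guarantees integrability of $[(w_N)_+^\tau-1]K(w-e_N)$ near $w=e_N$ and at infinity). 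The variation of $\xi_\alpha$ across the far-field, i.e. replacing $\xi_\alpha(x+z)$ by $\xi_\alpha(x)$, costs another factor and is accounted for in $O(\rho^{\tau+\alpha-2s})$.

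The main obstacle I expect is bookkeeping: simultaneously handling (i) the non-translation-invariance of $\xi_\alpha$, (ii) the $C^2$ deviation of $d$ from the linear profile $y_N$, and (iii) the principal value near the singularity $z=0$ and near the ``boundary singularity'' of $d^\tau$ (the point where $x+z$ crosses $\partial\Omega$, i.e. $w=e_N$ after rescaling), all uniformly in $K\in\mathcal K$ — that is, with constants in the $O(\cdot)$ depending only on $N,s,\Omega$ and the ellipticity constants $\gamma,\Gamma$, not on $a$. The uniformity is what forces one to use only the two-sided bound $\gamma\le a\le\Gamma$ and the explicit homogeneity, rather than any smoothness of $a$. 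I would organize the estimate so that each error term is traced to an explicit integral of the form $\int |w|^{-N-2s}(\text{power of }|w|)\,dw$ over an annulus or complement, whose size in $\rho$ is immediate by scaling, and then collect the exponents to match the three error terms in~\eqref{formulaprod}. Finally I would note that the restriction to $x$ ``close to the boundary'' is used to ensure the flattening chart and the Taylor expansions are valid on the relevant scale $\rho$, and that the stated expansion is the one needed to build the barriers $\epsilon d^{s-1}+\eta d^\tau$ appearing in Proposition~\ref{propcomparison} and in the construction for Theorem~\ref{teo1}.
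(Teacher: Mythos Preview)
Your plan is plausible and could be made to work, but it takes a genuinely different route from the paper's proof. The paper does \emph{not} rescale the full product $d^\tau\xi_\alpha$ and compare to the half-space directly. Instead it begins with the exact (Leibniz-type) product identity
\[
L_K(d^\tau\xi_\alpha)(x)=\xi_\alpha(x)\,L_K d^\tau(x)+d^\tau(x)\,L_K\xi_\alpha(x)+2\,\mathcal B(d^\tau,\xi_\alpha)(x),
\]
where $\mathcal B(f,g)(x)=\tfrac12\int(f(y)-f(x))(g(y)-g(x))K(x-y)\,dy$. The first two terms are then read off from existing results: $L_Kd^\tau(x)=\rho^{\tau-2s}(c_K(\tau)+O(\rho^s))$ is quoted from the companion paper~\cite{DQTe}, and $L_K\xi_\alpha(x)=O(|x-x_0|^{\alpha-2s})$ from~\cite{DQT}. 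All of the actual work in the paper's proof is the estimate $\mathcal B(d^\tau,\xi_\alpha)(x)=O(\rho^{\tau+\alpha-2s})+O(\rho^\tau)$, carried out by splitting $\R^N$ into a fixed cylinder $Q_\eta$, its complement, and then $Q_\eta$ into an inner cube $Q_{\rho/(2\sqrt N)}(x)$ and the annulus $\tilde Q$, using the graph representation $d(y)\sim y_N-\psi(y')$ and the H\"older bound $|\xi_\alpha(y)-\xi_\alpha(x)|\le C|x-y|^\alpha$.

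What each approach buys: the product formula immediately isolates the main term $\xi_\alpha(x)\rho^{\tau-2s}c_K(\tau)$ and the $O(\rho^s)$ correction by delegating to a previously proved scalar estimate for $L_Kd^\tau$, so the half-space comparison never has to be redone; the only new object is the cross term $\mathcal B$, whose integrand carries the extra decay factor $|\xi_\alpha(y)-\xi_\alpha(x)|$ and is therefore easier to bound than the full integrand. Your direct blow-up would have to extract $c_K(\tau)$ \emph{and} track the variation of $\xi_\alpha$ simultaneously; when you write ``replacing $\xi_\alpha(x+z)$ by $\xi_\alpha(x)$ costs another factor,'' you are implicitly performing the product decomposition anyway, since the discrepancy is exactly $2\mathcal B(d^\tau,\xi_\alpha)(x)+d^\tau(x)L_K\xi_\alpha(x)$. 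One small imprecision: the constraint $\alpha\in(0,2s)$ does not enter the integrability of $c_K(\tau)$ (that depends only on $\tau\in(-1,2s)$); rather, $\alpha<2s$ is what makes $L_K\xi_\alpha$ finite and yields the bound $d^\tau(x)L_K\xi_\alpha(x)=O(\rho^{\tau+\alpha-2s})$.
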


\begin{proof} For simplicity, we omit the dependence of $K$ on $L_K$ and of $\alpha$ in $\xi_\alpha$. 

Recalling $\rho = d(x)$, after a rotation and translation, we can assume $x = \rho e_N$, and that its projection to the boundary is the origin. For $\eta > 0$, we denote the cylinder $Q_\eta = B_\eta' \times (-\eta, \eta)$. We eventually fix $\eta$ small enough, but independent of $\rho$. By the compactness of $\partial \Omega$, we have the existence of $\eta \in (0,1)$ small enough (just depending on $\Omega$) such that $Q_\eta \cap \partial \Omega$ can be parametrized as $\{ (x', \psi(x')) : x' \in B_\eta' \}$ with $\psi \in C^2$, and such that $\psi(0') = 0$, $D\phi(0') = 0$. 

Using the product formula, for each $x \in \Omega$ close to the boundary, we have
$$
L (d^\tau \xi)(x)  = \xi(x) L d^\tau(x) + d^\tau(x) L \xi(x) + 2 \B(d^\tau, \xi)(x),
$$
where $\B = \B_K$ is given by
\begin{equation}\label{bilineal}
\B(f,g)(x) = \frac{1}{2} \int_{\R^N} (f(y) - f(x))(g(y) - g(x)) K(x - y)dy,
\end{equation}
for adequate $f, g: \R^N \to \R$.

Using Proposition 3.1 in~\cite{DQTe}, we have
\begin{align}\label{1}
Ld^\tau(x) = \rho^{\tau - 2s} (c_K(\tau) + O(\rho^{s})),
\end{align}
where the $O$-term depend only on $N, \Omega, s, \tau$ and the ellipticity constants. 

Using Lemma 3.3 in~\cite{DQT}, we have
\begin{align}\label{2}
L \xi(x) = O(1) |x - x_0|^{\alpha - 2s},
\end{align}
where the $O$-term depends on $N, \alpha, s$ and the ellipticity constants. Since $|x - x_0| \geq \rho$ and $\alpha < 2s$, we get the first two terms in the right-hand side of~\eqref{formulaprod}.

From now on, we concentrate in the bilinear form. For a set $A \subset \R^N$, we denote $\B[A]$ the operator $\B$ defined in~\eqref{bilineal}, but with domain of integration $A$. Using the boundedness of $\xi$ and the integrability of $d^\tau$, it is easy to see that
\begin{align}\label{bilineal2}
\B(d^\tau, \xi)(x) = & \B[Q_\eta](d^\tau, \xi)(x) + \B[Q_\eta^c](d^\tau, \xi)(x).
\end{align}

We assume $\rho \leq \eta/4$, from which $x \in Q_\eta$ and $\mathrm{dist}(x, Q_\eta^c) \geq \rho/2$.

For the integral over $Q_\eta^c$, we see that
\begin{align*}
\B[Q_\eta^c] (d^\tau, \xi)(x) = & \int_{Q_\eta^c \cap \Omega} d^\tau(y)\xi(y)K(x - y)dy -\xi(x)\int_{Q_\eta^c \cap \Omega} d^\tau(y) K(x - y)dy  \\
& -\rho^\tau \int_{Q_\eta^c} \xi(y) K(x - y)dy + \rho^\tau \xi(x) \int_{Q_\eta^c} K(x - y)dy, 
\end{align*}
from which, using that $\tau > -1$, $\alpha < 2s$ and that $\Omega$ is bounded, we arrive at
\begin{align}\label{BQetac}
\B[Q_\eta^c] (d^\tau, \xi)(x) =  O(1) + O(\rho^\tau),
\end{align}
where the $O$-term depends on the data and $\eta$.

Notice that if $\tau = 0$, then
\begin{equation*}
\B[Q_\eta](d^\tau, \xi)(x) = -\int_{\Omega^c \cap Q_\eta} (\xi(y) - \xi(x))K(x-  y)dy = O(\rho^{\alpha - 2s}),
\end{equation*}
which concludes the case $\tau = 0$. 

Now we deal with the first term in the right-hand side of~\eqref{bilineal2} for $\tau \neq 0$. For this, given $a > 0$ we denote $Q_a(x) = x + Q_a$, and we write 
$$
\tilde Q = Q_\eta \setminus Q_{\frac{\rho}{2\sqrt{N}}}(x)
$$ 
for simplicity. Thus, we split the integral as
\begin{align}\label{BQeta}
\B[Q_\eta](d^\tau, \xi)(x) = \B[\tilde Q](d^\tau, \xi)(x) + \B[Q_{\frac{\rho}{2\sqrt{N}}}(x)](d^\tau, \xi)(x)
\end{align}

If $y \in Q_{\frac{\rho}{2\sqrt{N}}}(x)$, using that $|x - x_0| \geq \rho$ we have
\begin{align*}
& |\xi(y) - \xi(x)| \leq C \rho^{\alpha - 1} |x - y|, \\
& |d^\tau(y) - d^\tau(x)| \leq C \rho^{\tau - 1} |x - y|,
\end{align*}
for some $C > 0$ just depending on the data. Thus
\begin{equation*}
\B[Q_{\frac{\rho}{2\sqrt{N}}}(x)](d^\tau, \xi)(x) = O(\rho^{\alpha + \tau - 2}) \int_{Q_{\frac{\rho}{2\sqrt{N}}}(x)} |x - y|^2 K(x - y) dy, 
\end{equation*}
from which we conclude that
\begin{equation}\label{BQint}
\B[Q_{\frac{\rho}{2\sqrt{N}}}(x)](d^\tau, \xi)(x) = O(\rho^{\alpha + \tau - 2s}),
\end{equation}
where the $O$-term just depends on the data.

Now, for the integral over $\tilde Q$, using the H\"older continuity of $\xi$ we have
\begin{align*}
\B[\tilde Q](d^\tau, \xi)(x) = & O(1) \int_{\tilde Q} \frac{d^{\tau}(y) - \rho^\tau}{|x - y|^{N + 2s - \alpha}}dy =: O(1) I,
\end{align*}
%
%
where the $O$-term just depends on the data. Observe that
\begin{equation}\label{lowerI}
I \geq -\rho^\tau \int_{\tilde Q} |x - y|^{-(N + 2s) + \alpha} dy \geq -C \rho^{\tau + \alpha - 2s},
\end{equation}
for some $C > 0$ just depending on the data and $\eta$.

For the upper bound, we see that
\begin{equation*}
I \leq \int_{\tilde Q \cap \Omega} \frac{d^\tau(y) - \rho^\tau}{|x - y|^{N + 2s - \alpha}}dy.
\end{equation*}

Now, using Lemma 3.1 in~\cite{CFQ}, we have the existence of a constant $C_\Omega > 0$ just depending on $\Omega$ such that, for each $y = (y', y_N) \in \Omega$ close to $\partial \Omega$, we have that $y_N > \psi(y')$ and 
\begin{equation*}
(y_N - \psi(y'))(1 - C_\Omega |y'|^2) \leq d(y) \leq y_N - \psi(y').
\end{equation*}

Then, by taking $\eta$ smaller if necessary, we can write
\begin{align}\label{upperI}
\begin{split}
I \leq & \int \limits_{\tilde Q \cap \Omega} \frac{(y_N - \psi(y'))^\tau  - \rho^\tau}{|x - y|^{N + 2s - \alpha}}dy + C \int \limits_{\tilde Q \cap \Omega^c} \frac{(y_N - \psi(y'))^\tau |y'|^2}{|x - y|^{N + 2s - \alpha}}dy \\
=: & I_{1} + I_{2}.
\end{split}
\end{align}

For $I_{2}$, using that $|x - y| \geq |y'|$ we have
\begin{align*}
I_{2} \leq & C \int_{B_\eta'} |y'|^{2 + \alpha - N - 2s} \int_{\psi(y')}^{\eta} (y_N - \psi(y'))^\tau dy_N dy' \\
\leq & C \frac{\eta^{\tau + 1}}{\tau + 1} \int_{B_\eta'} |y'|^{2 + \alpha - N - 2s} dy',
\end{align*}
concluding that 
$$
I_{2} = O(1)
$$
where the $O$-terms depends on $C_\Omega, N, s, \eta, \tau$, but not on $\rho$.

For $I_{1}$, performing the change $y = \rho^{-1} y$, and denoting $\tilde \psi(y') = \rho^{-1} \psi(\rho y')$ we have
\begin{equation*}
I_{1} = \rho^{\tau - 2s + \alpha} \int_{\rho^{-1} (\tilde Q \cap \Omega)} \frac{(y_N - \tilde \psi(y'))^\tau}{|e_N - y|^{N + 2s - \alpha}}dy.
\end{equation*}
\bigskip

Notice that $|e_N - y| \geq 1/(2\sqrt{N})$ for all $y \in \rho^{-1} (\tilde Q \cap \Omega)$. We consider the decomposition $\rho^{-1} (\tilde Q \cap \Omega) = A \cup B$, with 
\begin{equation*}
A = \{ y = (y', y_N) \in \rho^{-1} (\tilde Q \cap \Omega): \tilde \psi(y') < y_N < \tilde \psi(y') + 1/2 \}, \quad \mbox{and} \quad B = \rho^{-1} \tilde Q \setminus A.
\end{equation*}

Since for $y \in A$ we have $|e_N - y| \geq c(1 + |y'|)$ for some $c \in (0,1)$, we have
\begin{align*}
\int_{A}  \frac{(y_N - \tilde \psi(y'))^\tau}{|e_N - y|^{N + 2s - \alpha}}dy \leq & C \int_{B_\eta'} (1 + |y'|)^{-(N + 2s - \alpha)} \int_{\tilde \psi(y')}^{1 + \tilde \psi(y')} (y_N - \tilde \psi(y'))^\tau dy_N dy' \\
\leq & C \int_{B_\eta'} (1 + |y'|)^{-(N + 2s - \alpha)} dy', \\
\leq & C
\end{align*}
for some $C > 0$ not depending on $\rho$. On the other hand, since for $y \in B$ we have $(y_N - \tilde \psi(y')) \geq c$, and that $|e_N - y| \geq c(1 + |y|)$ for some $c \in (0,1)$, since $\alpha < 2s$ we have 
\begin{equation*}
\int_{B}  \frac{(y_N - \tilde \psi(y'))^\tau}{|e_N - y|^{N + 2s - \alpha}}dy \leq C \int_{B}  \frac{1}{|e_N - y|^{N + 2s - \alpha}}dy \leq C,
\end{equation*}
for some $C > 0$. This implies that
$$
I_1 = O(\rho^{\tau - 2s + \alpha})
$$

Joining the estimates for $I_1$ and $I_2$, and replacing into~\eqref{upperI}, together with~\eqref{lowerI}, we conclude that
\begin{equation*}
\B[\tilde Q](d^\tau, \xi)(x) = O(\rho^{\tau - 2s + \alpha}).
\end{equation*}

Using this estimate and~\eqref{BQint}, and replacing into~\eqref{BQeta}, we conclude the estimate for $\B[Q_\eta](d^\eta, \xi)(x) = O(\rho^{\tau + \alpha- 2s})$. We replace this and~\eqref{BQetac} into~\eqref{bilineal2} to arrive at
\begin{equation*}
\B(d^\tau, \xi)(x) = O(\rho^{\tau + \alpha - 2s}) + O(1) + O(\rho^\tau).
\end{equation*}

This, together with~\eqref{1} and~\eqref{2} lead us to the result.
%
%
%
%
%
%
%
%
\end{proof}

%
%
%
%
%

\section{Proof of Theorem~\ref{teo1}.}\label{sec4}

This section is entirely devoted to the 
\begin{proof}[Proof of Theorem~\ref{teo1}:] 
By compactness of $\partial \Omega$, we can consider an extension of the boundary condition $h$ (that we still denote as $h$) and a modulus of continuity $m$ such that
$$
|h(x) - h(y)| \leq m(|x - y|) \quad \mbox{for all} \ x,y \in \R^N.
$$

It is known that such a function $m$ satisfies 
\begin{equation}\label{modulus}
m(t) \leq m(\eta)(1 + t/\eta)
\end{equation}
for all $t, \eta > 0$, see~\cite{I}.


We start the construction of suitable barriers. 

First, using a power of the distance function with positive, small exponent $\beta \in (0,s)$ (see~\cite{DQT}), it is  possible to construct a solution $w_1$ to the problem
\begin{equation*}
\left \{ \begin{array}{rll} \M^+ w_1 & = -1 \quad & \mbox{in} \ \Omega, \\ w _1 & = 0 \quad & \mbox{in} \ \Omega^c, \end{array} \right .
\end{equation*}
such that $c d^\beta \leq w$ in $\Omega$, for some positive constant $c$.

Let $s - 1 < \tau < s - 1 + \alpha$. By the estimates in~\cite{DQTe}, a smooth interior modification of the power function $d^\tau$ allows us to construct a function $\tilde w_2$ such that $\M^+ \tilde w_2 \leq -d^{\tau -2s}$ in an open neighborhood of $\partial \Omega$ (relative to $\Omega$), such that $\M^+ \tilde w_2 \leq C_{\Omega'}$ for each $\Omega ' \subset \subset \Omega$, and equal to zero in the exterior of the domain. Using a linear combination of this function and $w_1$, we can construct a function $w_2$ with $w_2 \geq 0$ in $\Omega$, solving
\begin{equation*}
\left \{ \begin{array}{rll} \M^+ w_2 & \leq  -d^{\tau -2s} \quad & \mbox{in} \ \Omega, \\ w_2 & = 0 \quad & \mbox{in} \ \Omega^c, \end{array} \right .
\end{equation*}
in the viscosity sense.

Now, we adapt the strategy of~\cite{CDLP} to the unbounded setting to get the barriers with the precise boundary condition.
For each $y \in \partial \Omega$ and $\eta \in (0,1)$, we consider
$$
V_y^\eta(x) =\Big{(} h(y) + m(\eta) + \frac{m(\eta)}{\eta} |x - y| \Big{)} d_+^{s-1}(x) + C_2^\eta w_2(x), \quad x \in \R^N,
$$
with $C_2^\eta > 0$ to be chosen (of order $m(\eta)/\eta$). 

For each $y \in \partial \Omega$, we have $V_y = 0$ in $\Omega^c$. By~\eqref{modulus} and the fact that $w_2 \geq 0$, we see that $V_y(x)d^{1 - s}(x) \geq h(x)$ for all $x \in \Omega$. Then, using the estimates of Lemma~\ref{lematec} and the definition of $w$, for each $x \in \Omega$ we have
\begin{align*}
& \I V^\eta_y(x) \\
\leq & C \Gamma \| h \|_\infty d(x)^{-1} + C \Gamma \frac{m(\eta)}{\eta} (|x - y| d(x)^{-1} + d(x)^{-s}) + C_2^\eta \M^+ w_2(x) \\
\leq & C \Gamma \| h \|_\infty d(x)^{-1} + C \Gamma \frac{m(\eta)}{\eta} d(x)^{-1} - C_2^\eta d(x)^{\tau - 2s},
\end{align*}
and by the choice of $\tau$ we have $\tau - 2s < -1$, and taking $C_2^\eta = C_2 m(\eta)/\eta$ for some $C_2 > 0$ large enough, we conclude that
\begin{equation*}
\I V_y^\eta \leq -\frac{C_2^\eta}{2} d^{\tau - 2s} \quad \mbox{in} \ \Omega,
\end{equation*}
from which we have constructed a (strict) superharmonic function, taking the homogeneous exterior data.

 By standard results in the viscosity theory, we have the function
$$
V(x) = \inf \{ V_y^\eta(x) : \eta \in (0,1), \ y \in \partial \Omega \}, 
$$
is in $L^1_\omega(\R^N)$, it is strict superharmonic in $\Omega$, and satisfies the exterior condition $V = 0$ in $(\bar \Omega)^c$. It is direct to see that $V \in C(\Omega)$. 

On the other hand, taking $C_2^\eta$ as above, for each $y \in \partial \Omega$ and $\eta \in (0,1)$ we consider the function
$$
U_y^\eta(x) = \Big{(} h(y) - m(\eta) - \frac{m(\eta)}{\eta} |x - y| \Big{)} d^{s-1}_+(x) - C_2^\eta w_2(x), \quad x \in \R^N,
$$
is a viscosity subsolution to the problem, from which
$$
U(x) = \sup \{ U_y^\eta(x) : \eta \in (0,1), \ y \in \partial \Omega \}, 
$$
is continuous, subharmonic in $\Omega$ and $U = 0$ in $(\bar \Omega)^c$. Notice that for each $x \in \Omega$, and all $\eta \in (0,1), y \in \partial \Omega$ we have
$$
U(x) \leq h(x) d^{s-1}(x) \leq (h(y) + m(\eta) + m(\eta)|x - y|/\eta)d^{s-1}(x) + C_2^\eta w_2(x),
$$ 
from which $U(x) \leq V(x)$ in $\Omega$. Thus, invoking Proposition~\ref{Perron}, there exists a harmonic function $u \in C(\Omega)$ such that $U \leq u \leq V$, and equal to  zero in $(\bar \Omega)^c$.

Now we investigate the boundary behavior of the solution constructed in this way.

Recall $0 < \tau < s$. Notice that for all $x_0 \in \partial \Omega$, taking $y = x_0$ in the infimum defining $V$, and for all $\eta > 0$ we have
\begin{align*}
& d^{1 - s}(x) u(x) - h(x_0) \\
 \leq &  d^{1 - s}(x) V(x) - h(x_0) \\
\leq &  m(\eta) + \frac{m(\eta)}{\eta} |x - x_0| +
 C_2^\eta d^{s - 1}(x) w_2(x) \\
 \leq &  m(\eta) + \frac{m(\eta)}{\eta} |x - x_0| +
 CC_2^\eta d^{1 - s}(x) d^{\tau}(x) \\
  \leq & m(\eta) + (1 + C_2)\frac{m(\eta)}{\eta} (|x - x_0| + d^{1^-}(x)) \\
  \leq & m(\eta) + 2(1 + C_2)\frac{m(\eta)}{\eta} |x - x_0|^{1^-},
\end{align*}
where by $1^-$ we mean every exponent $\alpha \in (0,1)$. From here, taking infimum in $\eta \in (0,1)$ we conclude that $0 \leq u(x) - h(x_0) \leq \tilde m(|x - x_0|)$ for some modulus of continuity $\tilde m$ just depending on the data.



Similarly, for each $\eta > 0$ we have the estimate
\begin{align*}
& d^{1 - s}(x) u(x) - h(x_0) \\
 \geq & d^{1 - s}(x) U(x) - h(x_0) \\
\geq & - m(\eta) - 2(1 + C_2)\frac{m(\eta)}{\eta} |x - x_0|^{1^-} 
\end{align*}
and we conclude, by taking infimum in $\eta$, that $d^{1 - s}(x) u(x) - h(x_0) \geq \tilde m(|x - x_0|)$ for some modulus $\tilde m$. This concludes that $\lim_{x \to x_0} d^{1-s}(x) u(x) = h(x_0)$. The proof is now complete.
%
%
%
\end{proof}

As we mentioned in the Introduction, we can get the following
\begin{cor}\label{corfgh}
Assume the hypotheses of Theorem~\ref{teo1} hold. Assume that $f \in C_b(\Omega)$ and $g \in C(\bar \Omega^c) \cap L^1_\omega(\bar \Omega^c)$. Then, there exists a unique viscosity solution $u \in C^{1, \alpha}(\Omega)$ for the Dirichlet problem~\eqref{eqdefsol}.
\end{cor}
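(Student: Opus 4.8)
\textbf{Proof proposal for Corollary~\ref{corfgh}.}

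The plan is to reduce the non-homogeneous problem~\eqref{eqdefsol} with data $(f,g,h)$ to the homogeneous problem~\eqref{eq} already solved in Theorem~\ref{teo1}, by subtracting off a bounded solution carrying the interior source $f$ and the exterior datum $g$. First I would invoke the existence theory of Barles, Chasseigne and Imbert~\cite{BChI} (using the extremal Pucci operators $\M^\pm$ as barriers, as indicated in the Introduction) to produce bounded viscosity solutions $\bar v$ and $\ubar{v}$ of
\begin{equation*}
\left\{\begin{array}{rll} \M^+ \bar v & = -\|f\|_\infty & \mbox{in} \ \Omega, \\ \bar v & = g & \mbox{in} \ \bar\Omega^c, \end{array}\right.
\qquad
\left\{\begin{array}{rll} \M^- \ubar{v} & = \|f\|_\infty & \mbox{in} \ \Omega, \\ \ubar{v} & = g & \mbox{in} \ \bar\Omega^c. \end{array}\right.
\end{equation*}
Since $\M^- w \le \I w \le \M^+ w$ for any admissible $w$, the function $\bar v$ is a bounded supersolution of $\I v = f$ and $\ubar{v}$ a bounded subsolution, with $\ubar{v} = \bar v = g$ in $\bar\Omega^c$ and $\ubar{v} \le \bar v$ in $\Omega$ by the comparison principle for the Pucci operators. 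Perturbing one of them slightly (e.g. replacing $\|f\|_\infty$ by $\|f\|_\infty+1$ to make it strict), Proposition~\ref{Perron} yields a bounded viscosity solution $v \in C(\R^N\setminus\partial\Omega)$ of $\I v = f$ in $\Omega$ with $v=g$ in $\bar\Omega^c$; by the interior $C^{1,\alpha}$ estimates of Caffarelli and Silvestre~\cite{CS} (applied to the equation $\I v = f$ with bounded right-hand side) we have $v \in C^{1,\alpha}_{loc}(\Omega)\cap L^\infty_{loc}(\Omega)$.

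Next I would translate the unknown: set $u = w + v$, where $w$ should solve a homogeneous-type problem. The key point is that $v$ is bounded, hence $d^{1-s}(x)v(x) \to 0$ as $x\to x_0\in\partial\Omega$ (because $s<1$), so the limit boundary condition is unaffected by the substitution. The equation for $w$ is, however, not $\I w = 0$ but rather the ``shifted'' equation $\I_v w := \I(w+v) - f = 0$, which is again a Bellman--Isaacs operator of the same structure acting on $w$, but now with an inhomogeneous term coming from the nonlocal coupling with the fixed bounded function $v$ on $\Omega^c$ and the lower-order contributions. The cleaner route, and the one I would actually carry out, is to avoid shifting the operator and instead repeat the barrier construction of Theorem~\ref{teo1} directly, replacing the functions $V_y^\eta$, $U_y^\eta$ by $V_y^\eta + v$ and $U_y^\eta + v$: since $\I$ is degenerate between the Pucci extremals, $\I(V_y^\eta + v) \le \M^+ V_y^\eta + \M^+ v \le \M^+ V_y^\eta + (-\M^-(-v)) $, and the dominant singular term $-\tfrac{C_2^\eta}{2} d^{\tau-2s}$ from Theorem~\ref{teo1}'s computation absorbs the bounded error $\M^+ v$ near $\partial\Omega$ (where $d^{\tau-2s}\to+\infty$), so $V_y^\eta+v$ remains a strict supersolution of $\I u = f$ in a neighbourhood of $\partial\Omega$; a standard interior correction using $w_1$ extends this to all of $\Omega$. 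Symmetrically $U_y^\eta+v$ is a subsolution. Taking $V = \inf_{y,\eta}(V_y^\eta+v)$ and $U = \sup_{y,\eta}(U_y^\eta+v)$, one has $U \le V$ in $\Omega$ (the argument is verbatim that of Theorem~\ref{teo1}, since the added $v$ cancels), both carry exterior datum $g$, and Proposition~\ref{Perron} produces a solution $u$ with $U \le u \le V$; the sandwich gives the boundary blow-up profile exactly as before, because $d^{1-s}v \to 0$.

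Finally, interior regularity $u \in C^{1,\alpha}(\Omega)$ follows from~\cite{CS} applied to $\I u = f$ with $f$ bounded, locally in $\Omega$ where $u$ is locally bounded (which it is, being squeezed between $U$ and $V$, both locally bounded in $\Omega$). Uniqueness follows from Proposition~\ref{propcomparison}: if $u_1, u_2$ are two solutions, then $u_1 - u_2$ is, by the sub/superadditivity $\I u_1 - \I u_2 \le \M^+(u_1-u_2)$, a subsolution of $\M^+ z = 0$ in $\Omega$ with $z=0$ in $\bar\Omega^c$ and $\limsup_{z\to x} d^{1-s}(x)(u_1-u_2) \le 0$ on $\partial\Omega$ (the boundary limits agree, both equal to $h$); the comparison principle—which was proved in Proposition~\ref{propcomparison} for $\I$ and applies equally to $\M^+$, as $\M^+$ is itself of the form~\eqref{opp}—gives $u_1 \le u_2$, and symmetrically $u_2 \le u_1$. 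The main obstacle is the nonlocal coupling: one must check carefully that adding the fixed bounded function $v$ to the barriers does not spoil the strictness of the differential inequalities near $\partial\Omega$, i.e.\ that the error term $\M^+ v$ (resp.\ $\M^- v$), which is merely locally bounded in $\Omega$ and need not even be bounded up to $\partial\Omega$ in general, is genuinely dominated by the singular barrier term $d^{\tau-2s}$; this is where the choice $\tau - 2s < -1$ and the integrability of $v$ in $L^1_\omega$ enter, and it is worth writing out that estimate explicitly rather than treating it as routine.
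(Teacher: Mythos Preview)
Your argument is correct in outline, but the paper's route is shorter and sidesteps the ``main obstacle'' you flag at the end. Instead of first building a bounded solution $v$ of $\I v=f$ and then re-running the whole barrier construction of Theorem~\ref{teo1} with $V_y^\eta+v$, $U_y^\eta+v$, the paper takes the large harmonic function $w$ already produced by Theorem~\ref{teo1} and adds to it the bounded solutions $u_1,u_2$ of the \emph{extremal} problems $\M^- u_1=\|f\|_\infty+1$, $\M^+ u_2=-\|f\|_\infty-1$ with exterior datum $g$. The uniform ellipticity inequality $\M^- u_i\le \I(w+u_i)-\I w\le \M^+ u_i$, together with $\I w=0$, immediately makes $w+u_1$ a strict subsolution and $w+u_2$ a strict supersolution of~\eqref{eqdefsol}; since $u_1,u_2$ are continuous up to $\partial\Omega$, the blow-up profile $h$ is inherited from $w$ unchanged, and Perron plus Proposition~\ref{propcomparison} finish the job. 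No intermediate $v$ is needed, and no re-examination of the barriers $V_y^\eta$ is required.

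Your detour is not wrong, but the obstacle you isolate is self-inflicted: the crude bound $\I(V_y^\eta+v)\le \M^+ V_y^\eta+\M^+ v$ throws away the information that $v$ actually solves $\I v=f$. The sharper ellipticity inequality gives $\I(V_y^\eta+v)\le \I v+\M^+ V_y^\eta=f+\M^+ V_y^\eta$, and since $f$ is bounded the singular term $-\tfrac{C_2^\eta}{2}d^{\tau-2s}$ dominates trivially near $\partial\Omega$; there is no need to control $\M^+ v$ at all, and the integrability remark about $v\in L^1_\omega$ is a red herring. With that correction your argument goes through cleanly. The paper's version is the same idea stripped to its minimum: work with the extremal corrections $u_1,u_2$ (whose Pucci evaluations are constant by construction) rather than a genuine solution $v$ of $\I v=f$, and add them to the finished object $w$ rather than to the pre-barriers.
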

\begin{proof}
By the results of~\cite{BChI}, we can consider $u_1, u_2 \in C(\R^N)$ solving, respectively, the Dirichlet problems
\begin{align*}
\left\{\begin{array}{rll}
\mathcal M^- u_1& = \| f \|_\infty + 1& \mbox{in} \  \Omega,\\
u_1 & = g &  \mbox{in} \  \Omega^c, 
\end{array}\right.
\quad \mbox{and} \quad 
\left\{\begin{array}{rll}
\mathcal M^+ u_2 & = -\| f \|_\infty - 1& \mbox{in} \  \Omega,\\
u_2 & = g &  \mbox{in} \  \Omega^c.
\end{array}\right.
\end{align*}

Using that the $\I$ is uniformly elliptic in the sense of~\cite{CS}, that is
$$
\M^- (u - v) \leq \I(u) - \I(v) \leq \M^+(u - v), \quad \mbox{for suitable} \ u, v,
$$
and denoting by $w$ the fractional harmonic function in Theorem~\ref{teo1}, we conclude that $w + u_1$ is a (strict) subsolution to~\eqref{eqdefsol}, and $w + u_2$ is a viscosity supersolution for the same problem. In fact, they satisfy the exterior condition in $\bar \Omega^c$, and since $\mathcal M^- u_1 \leq \mathcal M^- u_2$ in $\Omega$, by comparison principle we have $w + u_1 \leq w + u_2$. Since $u_1, u_2$ are continuous up to the boundary, the limit boundary condition given by $h$ is not modified. By Proposition~\eqref{propcomparison} we have this solution is unique.
\end{proof}

\begin{remark}\label{rmkh=0}
Under the assumptions of Corollary~\ref{corfgh}, if we assume $h \equiv 0$ on $\partial \Omega$, then we have that the unique solution to~\eqref{eqdefsol} is the unique bounded solution to $\I u = f$ in $\Omega$ and $u = g$ in $\Omega^c$ obtained in~\cite{BChI}. This solution is continuous up to the boundary. 

Moreover, as a consequence of the barriers constructed in the proof of Theorem~\ref{teo1}, we have the family of solutions $\{ u_s\}_{s \in (0,1)}$   found in Corollary~\ref{corfgh} is uniformly bounded and equicontinuous in each compact subset of $\Omega$. Here we have stressed the dependence on the parameter $s$ in~\eqref{Kestable}. Since $d^{s - 1}$ tends to $1$ locally uniformly in $\Omega$ as $s \to 1^-$, by stability of viscosity solutions we have the limit function $u(x) = \lim_{s\to 1} u_s(x)$ is the unique solution to the second-order Dirichlet problem $F(Du) = f$ in $\Omega$, $u = g$ on $\partial \Omega$. For $M$ an $N \times N$ real matrix, $F(M)$ is given by
\begin{equation*}
F(M) = \inf_{i \in I} \sup_{j \in J} A_{ij}^k M_{kk}, \quad \mbox{with} \quad A_{ij}^k = \lim_{s \to 1^-} \frac{1}{2 - 2s} \int_{S^{N - 1}} q_k^2 a_{ij}(q) dS(q),   
\end{equation*}
where we have used the convention of summation over repeated indices. The entires $A_{ij}$ are well-defined and positive if the ellipticity constants $\gamma, \Gamma$ of the family are comparable to the normalizing constant $C_{N,s}$ in~\eqref{fracLapl}.
\end{remark}

\section{Proof of Theorem~\ref{teo2}.}\label{sec6}

In this section we provide the
\begin{proof}[Proof of Theorem \ref{teo2}:]
We start with the upper bound.
 For this, we fix $x \in \Omega$ close to the boundary, and denote $r = d(x)/4$. Then, the function $\tilde u(y) := u(x + ry)$ solves the equation
$$
-\I \tilde u = 0 \quad \mbox{in} \ B_1.
$$

Thus, employing the interior $C^{1, \alpha}$ estimates of~\cite{K}, Theorem 4.1, we see that
\begin{equation*}
[\tilde u]_{C^{1, \alpha}(B_{1/2})} \leq C (\| \tilde u \|_{L^1(\R^N, \omega)} + \| \tilde u \|_{L^\infty(B_1)}).
\end{equation*}

Using the estimates for the solution $u$, it is easy to see that 
$$
\| \tilde u \|_{L^\infty(B_1)} \leq C r^{s - 1},
$$
for some $C > 0$ not depending on $r$. On the other hand, using again the estimates for $u$ we see that
\begin{align*}
\| \tilde u \|_{L^1_\omega(\R^N)} \leq & C r^{s - 1} \int_{B_{1/r}} \omega(y)dy + \int_{B_{1/r}^c} |u(x + ry)|\omega(y)dy \\
\leq & C r^{s - 1} + r^{2s} \int_{B_1^c} |u(x + z)||z|^{-(N + 2s)}dy,
\end{align*}
and using that $u \in L^1(\Omega) \cap L_\omega^1(\R^N)$, we conclude the existence of a constant $C > 0$ such that 
\begin{align*}
\| \tilde u \|_{L^1_\omega(\R^N)} \leq C r^{s - 1}.
\end{align*}

Then, we conclude that $[\tilde u]_{C^{1, \alpha}(B_{1/2})} \leq Cr^{s - 1}$. Scaling back and using that $r$ is proportional to $d(x)$, we conclude that $|Du(x)| \leq C d(x)^{s - 2}$. 

\medskip

For the lower bound for the gradient, we argue by contradiction, assuming the existence of a sequence  $z_j \in \Omega$ with $d_j := d(z_j) \to 0$ as $j \to \infty$, and such that
\begin{equation}\label{absurdo}
d_j^{s-2}|Du(z_j)| \to 0 \quad \mbox{as} \ j \to \infty.
\end{equation}

Let $\hat z_j \in \partial \Omega$ the (unique) projection of $z_j$ onto $\partial \Omega$. Without loss of generality, we can assume $\hat z_j \to 0$ and that $Dd(0) = e_N$. In particular, by the regularity of the boundary, we have $Dd(\hat z_j) = e_N + o_j(1)$ for some $o_j(1) \to 0$ as $j \to \infty$.

Now, consider the function 
$$
v_j(y)=d_j^{1-s}u(\hat z_j + d_j y).
$$

Denote $\Omega_j = d_j^{-1}(\Omega - \hat z_j)$. For each $y \in \R^N_+$, there exists $j_0 = j_0(y)$ large enough such that $y \in \Omega_j$ for all $j \geq j_0$. Notice that for $y \in \Omega_j$ we have 
$$
d(\hat z_j + d_j y) = d_j d_{\partial \Omega_j} (y),
$$
and from here, we notice the continuity estimates given in the proof of Theorem~\ref{teo1} allow us to write
\begin{align*}
d_{\partial \Omega_j}^{1 - s}(y) v_j(y) \leq & h(\hat z_j + d_j y) + \tilde m(|\hat z_j + d_j y|) \\
\leq & h(0) + o_j(1) + \tilde m(|\hat z_j + d_j y|)
\end{align*}
for all $y \in \R^N_+$ and all $j$ large enough. A similar lower bound can be established, and we can summarize them as
\begin{equation*}
h(0) - o_j(y) \leq d_{\partial \Omega_j}^{1 - s}(y) v_j(y) \leq h(0) + o_j(y), \quad y \in \Omega_j,
\end{equation*}
where $o_j(y) \to 0$ as $j \to \infty$, locally for $y \in \R^N_+$.
From here we conclude that $\{ v_j \}_j$ is equibounded in each compact set $K \subset \subset \R^N_+$. Thus, we see that $v_j \to V$ as $j \to \infty$, locally in $\R^N \setminus \R^{N - 1} \times \{0\}$, and taking limit as $j \to \infty$ we conclude that
$$
V(y) = h(0) (y_N)_+^{s-1}, \quad y \in \R^N_+.
$$

Using the  $C^{1, \alpha}$ estimates for the solution $u$, we can assume (up to subsequences) that $\{ v_j\}$ converges in $C^{1, \alpha}_{loc}(\R^N_+)$. Since $y_j := Dd(\hat z_j) \in B_{1/2}(e_N)$ for all $j$ large enough, we have that
$$
d_j^{2 - s} Du(\hat z_j + d_j y_j) = Dv_j(y_j) \to DV(e_N) = h(0) (s-1) e_N,
$$
as $j \to \infty$, but this is a contradiction with~\eqref{absurdo}. 
\end{proof}

\begin{remark}\label{rmkfinal}
We expect that if $h$ touches zero, then the lower bound for the gradient may fail to hold. We conjecture this through the following explicit computation in the half-space:
let
$$
u(x', x_N) = p' \cdot x' x_N^{s-1},
$$
for a nonzero vector $p' \in \R^{N - 1}$.

This function is $\Delta^s$-harmonic in the upper half-space. In fact, denoting $u_1(x) = (x_{N})_+^{s - 1}$ and $u_2(x) =  p' \cdot x'$ and noting that both functions are $s$-harmonic, for each $x = (x',x_N) \in \R^N_+$ we can write
\begin{align*}
& -(-\Delta)^s u(x) \\
= & 2 \mathcal B(\phi_1, \phi_2)(x) \\
= & C_{N,s} \int_{\R^N} \frac{(u_1(y) - u_1(x))(u_2(y) - u_2(x))}{|x - y|^{N + 2s}}dy \\
= & C_{N,s} \int_{\R} ((y_N)_+^{s - 1} - x_N^{s - 1}) \int_{\R^{N-1}} \frac{p' \cdot (y' - x')}{|x - y|^{N + 2s}}dy' dy_N \\
= & C_{N,s} \int_{\R} \frac{(y_N)_+^{s - 1} - x_N^{s - 1}}{|y_N - x_N|^{1 + 2s}} \int_{\R^{N-1}} \frac{p' \cdot (y' - \frac{x'}{|y_N - x_N|})}{(1 + |\frac{x'}{|y_N - x_N|} - y'|^2)^{\frac{N + 2s}{2}}}dy' dy_N \\
= & C_{N,s} \int_{\R} \frac{(y_N)_+^{s - 1} - x_N^{s - 1}}{|y_N - x_N|^{1 + 2s}} \int_{\R^{N-1}} \frac{p' \cdot z'}{(1 + |z'|^2)^{\frac{N + 2s}{2}}}dz' dy_N,
\end{align*}
and by the symmetry of the kernel we conclude the inner integral term vanishes.
Notice that
\begin{equation*}
Du(0', x_N) = x_N^{s-1} (p', 0),
\end{equation*}
which prevents the lower bound in Theorem~\ref{teo2} to hold when the boundary data vanishes on $\partial \Omega$. We expect to have similar behavior for general, bounded smooth domains.

Finally, we shall mention that we do not know if $u$ above is is the unique solution to problem~\eqref{eqdefsol} with $\Omega = \R^N_+$, $f, g \equiv  0$ and $h(x') = p' \cdot x'$.
\end{remark}

\medskip

\noindent {\bf Acknowledgements:}  G. D. was partially supported by Fondecyt Grant 1190209.
A. Q. was partially supported by Fondecyt Grant No. 1190282 and Programa Basal, CMM. U. de Chile.
E.T. was partially supported by Fondecyt Grant No. 1201897.


%
%
%

%

\end{document}